\documentclass[11pt]{amsart}
\usepackage{amssymb,latexsym,graphicx, amscd, enumitem}
\newtheorem{theorem}{Theorem}[section]
\newtheorem{prop}[theorem]{Proposition}
\newtheorem{lemma}[theorem]{Lemma}
\newtheorem{remark}[theorem]{Remark}
\newtheorem{question}[theorem]{Question}
\newtheorem{definition}[theorem]{Definition}

\newtheorem{example}[theorem]{Example}

\DeclareMathOperator{\ep}{\varepsilon}
\DeclareMathOperator{\C}{\mathbb C}
\DeclareMathOperator{\R}{\mathbb R}

\DeclareMathOperator{\N}{\mathbb{N}}

\newcommand{\inmult}{\, \lrcorner \,}
\begin{document}

\title[$J$-holomorphic curves from closed $J$-anti-invariant forms]{$J$-holomorphic curves from closed $J$-anti-invariant forms}

\subjclass[2010]{32Q60, 32Q65, 53C15} 

\author{Louis Bonthrone}
\address{Mathematics Institute\\  University of Warwick\\ Coventry, CV4 7AL, England}
\email{L.Bonthrone@warwick.ac.uk}

\author{Weiyi Zhang}
\address{Mathematics Institute\\  University of Warwick\\ Coventry, CV4 7AL, England}
\email{Weiyi.Zhang@warwick.ac.uk}

\begin{abstract} We study the relation between $J$-anti-invariant $2$-forms and pseudoholomorphic curves in this paper.  We show the zero set of a closed $J$-anti-invariant $2$-form on an almost complex $4$-manifold supports a $J$-holomorphic subvariety in the canonical class. This confirms a conjecture of Draghici-Li-Zhang. A higher dimensional analogue is established.  We also show the dimension of closed $J$-anti-invariant $2$-forms on an almost complex $4$-manifold is a birational invariant, in the sense that it is invariant under degree one pseudoholomorphic maps. 
\end{abstract}
 \maketitle

\tableofcontents
\section{Introduction}
Since the 1980s there has been a well-known folklore theorem (see \cite{LeB, Hon}) which says that for a generic Riemannian metric on a $4$-manifold with positive self-dual second Betti number, the zero set of a self-dual harmonic $2$-form is a finite number of embedded circles. It is the starting point of Taubes' attempts, {\it e.g.} \cite{Tsd}, to generalise the identification of Seiberg-Witten invariants and Gromov invariants for symplectic $4$-manifolds to general compact oriented $4$-manifolds. 

Following the philosophy of \cite{ZintJ}, any statement for smooth maps between smooth manifolds in terms of  Thom's transversality should also have its counterpart in the pseudoholomorphic setting without requiring the transversality or genericity, but using the notion of pseudoholomorphic subvarieties. 
Hence, the above genericity statement for the zero set of a self-dual harmonic $2$-form in the smooth category should find its counterpart in the almost complex setting without assuming genericity. It is stated as Question 1.6 in \cite{ZintJ} which first appeared in \cite{DLZiccm}. We recall it in the following. 

Let $(M^{2n}, J)$ be an almost complex manifold. The almost complex
structure acts on the bundle of real $2$-forms $\Lambda^2$ as the following
involution, $\alpha(\cdot, \cdot) \rightarrow \alpha(J\cdot,
J\cdot)$. This involution induces the splitting
\begin{equation} \label{formtypeJ}
\Lambda^2=\Lambda_J^+\oplus \Lambda_J^-
\end{equation}
corresponding to the eigenspaces of eigenvalues $\pm 1$ respectively.  The sections are called  $J$-invariant and
$J$-anti-invariant 2-forms respectively. The spaces of these sections are denoted by $\Omega^{\pm}_J$. The bundle  $\Lambda^-_J$ inherits an almost complex structure, still denoted by $J$, from $J\alpha(X, Y)=-\alpha(JX, Y)$.

On the other hand, for any Riemannian metric $g$ on a $4$-manifold, we have the well-known self-dual, anti-self-dual splitting of the bundle of $2$-forms, 
\begin{equation} \label{formtypeg}
\Lambda^2=\Lambda_g^+\oplus \Lambda_g^-.
\end{equation}
 When $g$ is compatible with $J$, {\it i.e.} $g(Ju, Jv)=g(u, v)$, we have $\Lambda_J^-\subset \Lambda_g^+$. In particular, it follows that a closed $J$-anti-invariant $2$-form is a $g$-self-dual harmonic form. Hence, a closed $J$-anti-invariant $2$-form is the natural almost complex refinement of a self-dual harmonic form on an almost complex $4$-manifold. Thus, our expectation is that the zero set of a $J$-anti-invariant $2$-form is a $J$-holomorphic curve.

Since the complex line bundle $\Lambda_J^-$ can be viewed as a natural generalisation of the canonical bundle of a complex manifold, it is instructive to take a brief digression and consider what is known in the complex setting.  On a complex surface, if $\alpha$ is a closed $J$-anti-invariant $2$-form then $J\alpha$ is also closed and $\alpha+iJ\alpha$ is a holomorphic $(2, 0)$ form. Hence the zero set $\alpha^{-1}(0)$ is a canonical divisor of $(M, J)$, {\it e.g.} by the Poincar\'e-Lelong theorem. This meets our expectations in the case when the almost complex structure is integrable. 

In the past $10$ years, there are many constructions of non-integrable almost complex structures that admit non-trivial closed $J$-anti-invariant $2$-forms, {\it e.g.} \cite{DLZ2, FT} and many others. In all of these examples, the zero sets are all $J$-holomorphic subvarieties.  For example, these forms on K3 or $T^4$ constructed in \cite{DLZ2} are nowhere vanishing.

In this paper we are able to confirm our above speculation on the zero set of a $J$-anti-invariant $2$-form for any compact almost complex $4$-manifold.

\begin{theorem}\label{4ZJhol}
Suppose $(M, J)$ is a compact connected almost complex $4$-manifold and $\alpha$ is a non-trivial closed $J$-anti-invariant $2$-form. Then the zero set $Z$ of $\alpha$ supports a $J$-holomorphic $1$-subvariety $\Theta_{\alpha}$ in the canonical class $K_J$.
\end{theorem}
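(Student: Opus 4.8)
The plan is to convert the condition $d\alpha=0$ into an overdetermined elliptic first-order system for a section of the canonical bundle, read off the structure of its zero locus from unique continuation and a similarity principle, and then identify the homology class by a characteristic-class computation.

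\textbf{Step 1: from $\alpha$ to a Cauchy-Riemann system.} Complexify and let $\beta$ be the $(2,0)$-component of $\alpha$, so that $\alpha=\beta+\bar\beta$; since $\Lambda^{2,0}_J\cap\Lambda^{0,2}_J=0$ the zero set satisfies $Z=\alpha^{-1}(0)=\beta^{-1}(0)$. On a $4$-manifold $\Lambda^{2,0}_J$ is the canonical bundle, which is isomorphic to $\Lambda^-_J$ as a complex line bundle, so $\beta$ is a section of it. Writing $d=\mu_J+\partial_J+\bar\partial_J+\bar\mu_J$ and using that $\partial_J$ and $\mu_J$ annihilate $(2,0)$-forms in complex dimension two, the condition $d\alpha=0$ reduces to the single equation $\bar\partial_J\beta+\mu_J\bar\beta=0$ between $(2,1)$-forms, its conjugate being automatic. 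Choosing coordinates $(z,w)$ near a point $p\in Z$ in which $J$ is standard at $p$, trivialising the canonical bundle by $dz\wedge dw$ and writing $\beta=u\,dz\wedge dw$, this becomes a real-linear elliptic first-order system
\[
\partial_{\bar z}u=au+b\bar u+O(|x|)\bigl(\partial_z u,\partial_w u\bigr),\qquad \partial_{\bar w}u=cu+d\bar u+O(|x|)\bigl(\partial_z u,\partial_w u\bigr),
\]
with $a,b,c,d$ smooth and governed by a Hermitian connection on the canonical bundle and the Nijenhuis tensor; when $J$ is integrable $\mu_J=0$, $u$ is holomorphic, and one recovers the classical Poincar\'e-Lelong picture. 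Note also that $\alpha\wedge\alpha=2|\beta|_g^2\,\mathrm{vol}_g\geq 0$ for a compatible metric $g$, so $\alpha$ is a closed self-dual $2$-form degenerating exactly along $Z$, the setting in which zero loci of Taubes' type are analysed.

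\textbf{Step 2: local structure of $Z$.} Unique continuation for this elliptic system shows $u$ cannot vanish to infinite order at a point without vanishing identically nearby; as $M$ is connected and $\alpha\not\equiv 0$, it follows that $Z$ has empty interior. Restricting $u$ to a $J$-holomorphic disc $\phi\colon D\to M$ meeting $Z$ and using $d\phi\circ j=J\circ d\phi$, the chain rule shows $u\circ\phi$ satisfies a one-variable generalised Cauchy-Riemann equation $\partial_{\bar\zeta}(u\circ\phi)=\tilde a\,(u\circ\phi)+\tilde b\,\overline{(u\circ\phi)}$, whence by the Carleman similarity principle it either vanishes identically or has only isolated zeros of finite order. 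Thus $Z$ has no isolated points and no $3$-dimensional strata: it is purely $2$-dimensional, and along its top stratum the order of vanishing of $u$ is locally constant. The crux is the local structure theorem asserting that near each of its points $Z$ is a finite union of images of nonconstant $J$-holomorphic maps from discs, each carrying the positive integer multiplicity given by that vanishing order. I expect this to be the main obstacle: upgrading the Cauchy-Riemann constraints on the $2$-dimensional set $Z$ to genuine $J$-holomorphic parametrisations, and controlling the branching, requires the hard analysis of Taubes' type developed for the zero sets of the Seiberg-Witten equations and for the tamed-to-compatible problem.

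\textbf{Step 3: globalisation and the canonical class.} Compactness of $M$ and the local finiteness from Step~2 assemble the local pieces into finitely many distinct irreducible $J$-holomorphic curves $C_1,\dots,C_k$ with well-defined multiplicities $m_i\in\N$, giving a $J$-holomorphic $1$-subvariety $\Theta_\alpha=\sum_i m_i C_i$ whose support is precisely $Z$. Finally $\Theta_\alpha$ is the zero divisor of the section $\beta$ of the complex line bundle $\Lambda^-_J$, counted with those vanishing orders, so $[\Theta_\alpha]$ is Poincar\'e dual to $e(\Lambda^-_J)$, which is the canonical class $K_J$. (Equivalently, deform $\beta$ through generic smooth sections of $\Lambda^-_J$, whose signed zero count is $e(\Lambda^-_J)$; the content of Step~2 is that for the closed form $\beta$ the geometric and algebraic multiplicities coincide, so no cancellation occurs.) Hence $Z$ supports the $J$-holomorphic $1$-subvariety $\Theta_\alpha$ in the class $K_J$, as claimed.
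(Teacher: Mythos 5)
Your Steps 1 and 2 set up essentially the same local machinery as the paper: the closedness of $\alpha$, viewed as a section of $\Lambda_J^-$ (equivalently the $(2,0)$-part $\beta$), yields a Cauchy--Riemann type equation along embedded $J$-holomorphic discs, and the Carleman similarity principle then gives isolated zeros of finite positive order on each disc (this is the content of the paper's Lemma on the trivialisation over a $J$-holomorphic disc, where one must be a little careful: the transverse derivatives of the coefficients are killed by contracting $d\alpha=0$ with the normal directions and pulling back along the disc, not by a bare chain rule). The genuine gap is exactly where you flag it: the passage from ``$Z$ is $2$-dimensional and meets $J$-holomorphic discs in finitely many positive points'' to ``$Z$ supports a $J$-holomorphic $1$-subvariety'' is the heart of the theorem, and deferring it to unspecified ``hard analysis of Taubes' type'' does not close it. The paper closes it by verifying the two hypotheses of Taubes' Proposition 6.1 of the SW$\Rightarrow$Gr paper: (i) $Z$ has \emph{finite} $2$-dimensional Hausdorff measure, proved by foliating neighbourhoods by $J$-holomorphic discs, ruling out discs contained in $Z$ via unique continuation, and then an upper-semicontinuity plus Vitali covering and coarea estimate giving a $C\varepsilon^{-2}$ covering bound; and (ii) $Z$ admits a \emph{positive cohomology assignment} $I_\alpha$, defined for every admissible disc $\sigma:D\to M$ (not just $J$-holomorphic ones) as the signed count of intersections of a perturbation of $\Gamma_\alpha\circ\sigma$ with the zero section $M\subset\Lambda_J^-$, whose five axioms (vanishing off $Z$, homotopy invariance, degree multiplicativity, additivity, and positivity on $J$-holomorphic discs via the similarity-principle trivialisation) must all be checked. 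Your proposal establishes neither the Hausdorff measure bound nor any such globally defined, homotopy-invariant intersection datum; your local assertion that the vanishing order is ``locally constant along the top stratum'' and your hoped-for local parametrisation theorem are precisely what Taubes' result is designed to replace, and they do not follow from the disc-wise similarity principle alone.

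Your Step 3 identification of the class is fine in outline and matches the paper: once $\Theta_\alpha$ exists, one perturbs the section of $\Lambda_J^-$ to a transverse one, so that the intersection numbers computing $[\Theta_\alpha]$ against arbitrary classes in $H_2(M;\mathbb Z)$ equal pairings with the Euler class of $\Lambda_J^-$, which is $K_J$. But this step presupposes the subvariety structure and the cohomology assignment from the missing middle step, so as written the argument is incomplete.
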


We will call the $J$-holomorphic $1$-subvariety $\Theta_{\alpha}$ stated in theorem the {\it zero divisor} of $\alpha$.

Here, a closed set $C\subset M$ with finite, non-zero
$2$-dimensional Hausdorff measure is said to be an irreducible $J$-holomorphic $1$-subvariety \cite{T1} if it has no isolated points and if the complement of a finite set
of points in $C$ is a connected smooth submanifold with $J$-invariant tangent space. A $J$-holomorphic $1$-subvariety is a finite set of pairs $\{(C_i, m_i), 1\le i\le m < \infty\}$, where each $C_i$ is an irreducible $J$-holomorphic $1$-subvariety and each $m_i$ is a positive integer.

The general scheme to prove Theorem \ref{4ZJhol} is similar to what is used in \cite{ZintJ} where we prove that the intersection of a compact $4$-dimensional pseudoholomorphic subvariety and a compact almost complex submanifold of codimension $2$ in a (not necessarily compact) almost complex manifold is a pseudoholmorphic $1$-subvariety. This basic strategy traces back to \cite{King} at least, where it works in the complex analytic setting. In the pseudo-holomorphic situation, this strategy was worked out by Taubes \cite{T}.

More concretely, the plan is to first show that $Z$ has finite 2-dimensional Hausdorff measure; this is done in section 2. The idea is to foliate neighbourhoods of points in $Z$ by $J$-holomorphic disks. Applying a dimension reduction argument with the help of a unique continuation result (Proposition \ref{noopen}) we are able to reduce our study to the intersection of $Z$ with $J$-holomorphic disks. We establish the positivity of such intersections in Lemma \ref{intJdisk2} by 
exhibiting a holomorphic trivialisation of $\Lambda_J^-$ over a given $J$-holomorphic disk. 
This lemma is the counterpart of Gromov's positivity of intersections of $J$-holomorphic curves with complex submanifolds of real codimension two \cite{Gr}. 

 If in addition we can find a ``positive cohomology assignment" for $Z$ in the sense of Taubes, which plays the role of intersection number of the set $Z$ with each local disk, we are able to show that $Z$ is a $J$-holomorphic $1$-subvariety by Proposition 6.1 of \cite{T} (stated as Proposition \ref{pcaholo} in our paper). 

Our strategy to associate a positive cohomology assignment to $Z$ is to view $J$-anti-invariant $2$-forms as sections of the bundle $\Lambda_J^-$. Now a $J$-anti-invariant form $\alpha$ defines a $4$-dimensional submanifold $\Gamma_{\alpha}$ in the total space of $\Lambda_J^-$ whose intersection with $M$, as submanifolds of $\Lambda_J^-$, describe the zero set of the form. Given a disk in $M$ whose boundary does not intersect $\Gamma_{\alpha}$, we can compose with a section and perturb to obtain a disk $\sigma ' : D \rightarrow \Lambda_J^-$ which intersects $M$ transversely. Then the oriented intersection number of $\sigma'$ defines a positive cohomology assignment. A finer study of positive cohomology assignment also gives rise to the desired information for the homology class of the zero divisor. 

Theorem \ref{4ZJhol} could be extended to sections of the bundle $\Lambda_{\R}^{n, 0}$ of real parts of $(n, 0)$ forms, which has a natural complex line bundle structure induced by the almost complex structure on $M$. The space of its sections is denoted $\Omega_{\R}^{n, 0}$. We have Theorem \ref{highdim} which, if Question \ref{conj} is answered affirmatively,  says that the zero set of a non-trivial closed form in $\Omega_{\R}^{n, 0}$ supports a pseudoholomorphic subvariety of real  codimension $2$. The key to establish this result is again a version of Lemma \ref{intJdisk2} for the bundle $\Lambda_{\R}^{n, 0}$; this is our Lemma \ref{highJdisk2}.

In Section \ref{bir} we study the relation between $J$-anti-invariant forms and birational geometry of almost complex manifolds. Recall that we have the cohomology groups \cite{LZ}  $$H_J^{\pm}(M)=\{ \mathfrak{a} \in H^2(M;\mathbb R) | \exists \; \alpha\in \mathcal Z_J^{\pm} \mbox{ such that } [\alpha] = \mathfrak{a} \}$$ generalising the real Hodge cohomology groups, where $\mathcal Z_J^{\pm}$ are the spaces of closed $2$-forms in $\Omega_J^{\pm}$.  It is proven in \cite{DLZ} that $H_J^+(M)\oplus H_J^-(M)=H^2(M; \R)$ when $\dim_{\R} M=4$. The dimensions  of  the vector spaces $H_J^{\pm}(M)$ are denoted by $h_J^{\pm}(M)$. 

In \cite{ZintJ} it is shown that the natural candidates which generalise birational morphisms to the almost complex category are degree one pseudoholomorphic maps. Using the local model given by Lemma \ref{intJdisk2} together with the foliation-by-disks technique as used to establish Theorem \ref{4ZJhol}, 
one can study the extension properties of closed $J$-holomorphic disks. This gives  us Proposition \ref{Hartog}, which should be compared with the Hartogs extension for pseudoholomorphic bundles over almost complex $4$-manifolds established in \cite{CZ}. 

With this Hartogs type extension for closed $J$-anti-invariant $2$-forms in hand, we are able to show that the dimension of $J$-anti-invariant cohomology is a birational invariant.

\begin{theorem} \label{birationalintro}
Let $\psi : (M_1,J_1) \rightarrow (M_2,J_2)$ be a degree $1$ pseudoholomorphic map between closed, connected almost complex $4$-manifolds. Then $h_{J_1}^-(M_1)=h_{J_2}^-(M_2)$.
\end{theorem}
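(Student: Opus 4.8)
The plan is to show that pullback along $\psi$ induces an isomorphism $\psi^*\colon H_{J_2}^-(M_2)\xrightarrow{\sim}H_{J_1}^-(M_1)$, from which the asserted equality of dimensions is immediate. First I would verify that $\psi^*$ is well defined on these subspaces. If $\alpha$ is a closed $J_2$-anti-invariant $2$-form on $M_2$ then $\psi^*\alpha$ is closed, and it is $J_1$-anti-invariant because $\psi$ intertwines the almost complex structures: for vector fields $X,Y$ on $M_1$,
\[
(\psi^*\alpha)(J_1X,J_1Y)=\alpha(d\psi\,J_1X,\,d\psi\,J_1Y)=\alpha(J_2\,d\psi X,\,J_2\,d\psi Y)=-(\psi^*\alpha)(X,Y).
\]
Since $\psi^*$ sends exact forms to exact forms, it descends to $\psi^*\colon H_{J_2}^-(M_2)\to H_{J_1}^-(M_1)$. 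Because $\psi$ has degree $1$, one has $\psi_*\psi^*=\mathrm{id}$ on $H^2(\,\cdot\,;\R)$, so $\psi^*$ is injective on $H^2(M_2;\R)$ and a fortiori on $H_{J_2}^-(M_2)$; this already yields $h_{J_2}^-(M_2)\le h_{J_1}^-(M_1)$.

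The real work is surjectivity. By the structure of degree one pseudoholomorphic maps established in \cite{ZintJ}, there is a $J_1$-holomorphic $1$-subvariety $E\subset M_1$ (the exceptional set) with finite image $Y=\psi(E)\subset M_2$ such that $\psi$ restricts to a diffeomorphism $M_1\setminus E\to M_2\setminus Y$; in particular $M_1\setminus E$ is open and dense in $M_1$. Given a closed $J_1$-anti-invariant $2$-form $\beta$ on $M_1$ representing a class $\mathfrak a\in H_{J_1}^-(M_1)$, transport it by $(\psi|_{M_1\setminus E})^{-1}$ to obtain a closed $J_2$-anti-invariant $2$-form $\gamma$ on $M_2\setminus Y$. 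The key claim is that $\gamma$ extends across the finite set $Y$ to a closed $J_2$-anti-invariant $2$-form $\widetilde\gamma$ on all of $M_2$: this is precisely the content of the Hartogs-type extension, Proposition \ref{Hartog}, whose hypotheses are verified near each component of $E$ using the local model of Lemma \ref{intJdisk2}. Once $\widetilde\gamma$ is in hand, $\psi^*\widetilde\gamma$ and $\beta$ are smooth $2$-forms on $M_1$ agreeing on the dense set $M_1\setminus E$, hence they coincide everywhere, so $\mathfrak a=[\beta]=\psi^*[\widetilde\gamma]$ lies in the image of $\psi^*$. Combined with injectivity, $\psi^*$ is an isomorphism and the theorem follows. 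This is the pseudoholomorphic counterpart of the birational invariance of $h^{2,0}$ for complex surfaces.

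I expect the extension step to be the main obstacle: the naive pushforward $\psi_*\beta$ of a \emph{generic} $2$-form under a blow-down is genuinely singular along the contracted curves, so one must show that $\beta$, being globally defined on the compact manifold $M_1$, necessarily vanishes along $E$ to the order prescribed by the relation between $K_{J_1}$ and $\psi^*K_{J_2}$ --- equivalently, that the zero divisor $\Theta_\beta\subset K_{J_1}$ supplied by Theorem \ref{4ZJhol} absorbs the exceptional curves; this is exactly the rigidity that Proposition \ref{Hartog} is designed to exploit. Two elementary observations support (and may be used in) this analysis: any closed $J$-anti-invariant $2$-form annihilates complex tangent planes, so it restricts to zero on every $J$-holomorphic curve and in particular $\mathfrak a\cdot[C]=0$ for each component $C$ of $E$; and since $H^2(M_1;\R)$ splits cup-orthogonally as $\psi^*H^2(M_2;\R)\oplus\ker\psi_*$ with $\ker\psi_*$ spanned by the exceptional classes, this forces $\mathfrak a\in\psi^*H^2(M_2;\R)$ on its own, consistent with the conclusion above.
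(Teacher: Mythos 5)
Your proposal is correct and takes essentially the same route as the paper: pull back along $\psi$, use the structure theorem for degree one pseudoholomorphic maps to transport a closed $J_1$-anti-invariant form to $M_2\setminus Y$ for a finite set $Y$, and extend across $Y$ by the Hartogs-type Proposition \ref{Hartog}, concluding by density that $\psi^*$ of the extension recovers the original form. The only cosmetic difference is that you get injectivity from $\psi_*\psi^*=\mathrm{id}$ on $H^2$, while the paper argues at the level of closed forms (using $\mathcal Z_J^-\cong H_J^-$) and smoothness; your closing speculations about vanishing orders along $E$ and the splitting of $H^2(M_1;\R)$ are not needed for the argument.
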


Together with the almost complex birational invariants defined in  \cite{CZ}, including plurigenera, Kodaira dimension, and irregularity, we have a rich source of invariants to study the birational geometry of almost complex manifolds.

In the last section, two applications are discussed. In Section \ref{mul}, we provide a definition of multiplicity of zeros of a continuous function $u:  D^2 \rightarrow \mathbb R^2$ which generalises the multiplicity of zeros of a holomorphic function.  This subsection could also be viewed as some explicit calculations of the intersection number used throughout the paper. In Section \ref{SW}, we discuss the relation between Theorem \ref{4ZJhol} and Taubes' program of SW=Gr for $4$-manifolds.

This work is partially supported by EPSRC grant EP/N002601/1. We are indebted to Tedi Draghici for pointing out that the divisor-to-section correspondence does not hold for  $J$-anti-invariant forms and their zero divisors, even when $J$ is tamed, by the examples in \cite{DLZ2}. We would like to thank Tian-Jun Li for his interest and his suggestions to improve the presentation of the paper. We are grateful to an anonymous referee for meticulous reading and helpful suggestions which improve the paper greatly, and another referee for pointing out reference \cite{Biq}.

\section{Finite $2$-dimensional Hausdorff measure}\label{HD2}
In this section, we assume $M$ is a $4$-dimensional closed manifold.  The peculiarity of dimension $4$ is that the Hodge operator $*_g$ of a Riemannian metric $g$ on $M$ also acts as an involution on $\Lambda^2$. Thus we have the self-dual, anti-self-dual splitting of the bundle of $2$-forms
$$\Lambda^2=\Lambda_g^+\oplus\Lambda_g^-.$$

On the other hand given an almost complex structure $J$ on $M$, we also get a splitting of the bundle of $2$-forms into $J$-invariant and $J$-anti-invariant forms
\begin{equation*}
\Lambda^2=\Lambda_J^+\oplus \Lambda_J^-.
\end{equation*}
The spaces of their sections are denoted $\Omega^{\pm}_J$. 
Moreover, we can always choose a compatible $g$ in the sense that $g$ is $J$-invariant, {\it i.e.} $g(Ju, Jv)=g(u, v)$. The pair $(g, J)$ defines a $J$-invariant (in general, non-closed) $2$-form $\omega$ by $$\omega(u, v)=g(Ju, v).$$ Such a triple $(J, g, \omega)$ is called an almost Hermitian structure. We have the decompositions 
\begin{equation}
\Lambda_g^+=\underline{\mathbb R}(\omega)\oplus \Lambda_J^-,
\end{equation}
\begin{equation}
\Lambda_J^+=\underline{\mathbb R}(\omega)\oplus \Lambda_g^-.
\end{equation}
In particular, $\Lambda_J^-\subset \Lambda_g^+$ and it follows that  every closed $J$-anti-invariant form is a harmonic $g$-self-dual form, {\it e.g.} Lemma 2.6 of \cite{DLZ}. 

Also recall that $\Lambda_J^-$ inherits an almost complex structure, still denoted by $J$, from $J\beta(X, Y)=-\beta(JX, Y)$.

In this section, we will show that the $2$-dimensional Hausdorff measure of the zero locus, $Z$, of any closed $J$-anti-invariant $2$-form is finite. 

\begin{prop} \label{hausdorff}
Let $(M,J)$ be a closed, connected, almost complex 4-manifold and suppose that $\alpha$ is a non-trivial, closed, $J$-anti-invariant $2$-form. Then the zero set $Z$ of $\alpha$ is compact, with Hausdorff dimension $2$ and finite $2$-dimensional Hausdorff measure. 
\end{prop}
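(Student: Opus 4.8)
The plan is to work locally near each point $p \in Z$ and show that $Z$ near $p$ is contained in the zero set of a single complex-valued function with controlled zero locus. Fix an almost Hermitian structure $(J,g,\omega)$. Since $\Lambda_J^-$ is a complex line bundle with the induced almost complex structure $J$, I would choose a local trivialization of $\Lambda_J^-$ over a small geodesic ball $B$ around $p$; relative to this trivialization $\alpha$ becomes a $\C$-valued function $f : B \to \C$ with $f(p) = 0$ precisely when $\alpha(p)=0$. The key point is to exploit the closedness of $\alpha$: because $\alpha$ is closed and $J$-anti-invariant, one gets a first-order elliptic-type equation for $f$ of the form $\bar\partial_J f = a f$ (a linear Cauchy–Riemann inequality $|\bar\partial f| \le c|f|$) after possibly adjusting the trivialization, exactly in the spirit of the complex surface case where $\alpha + iJ\alpha$ is holomorphic. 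This is essentially the content of the unique continuation result (Proposition \ref{noopen}) referenced in the introduction, and it says $f$ behaves like a pseudoholomorphic function.

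From such a Cauchy–Riemann inequality, the Aronszajn/Carleman unique continuation theorem gives that $f$ cannot vanish on an open set unless $\alpha \equiv 0$ on the connected manifold $M$, contradicting non-triviality; hence $Z$ has empty interior and, being closed, is a proper closed subset. More importantly, the similarity principle (Vekua's representation: $f = e^{h} g$ with $h$ continuous and $g$ genuinely $\bar\partial_J$-holomorphic, or a direct comparison argument) lets me transfer the local structure of zero sets of holomorphic functions in two complex variables to $f$. Concretely, I would foliate $B$ by a family of $J$-holomorphic disks $\{D_t\}$ through a codimension-two slice transverse to a fixed $J$-holomorphic disk — this is the foliation-by-disks technique advertised in the introduction — and observe that the restriction of $f$ to each $D_t$ satisfies a one-variable Cauchy–Riemann inequality, hence either vanishes identically or has isolated zeros of finite total multiplicity, uniformly bounded on $B$ by a degree/energy argument.

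Given this, finiteness of the $2$-dimensional Hausdorff measure follows by a standard slicing (co-area/Eilenberg inequality) argument: $Z \cap B$ projects to the parameter space of disks, the fibers are the finite zero sets $Z \cap D_t$ with uniformly bounded count, and integrating gives $\mathcal H^2(Z \cap B) < \infty$; one also reads off that $Z$ cannot have Hausdorff dimension greater than $2$ (no disk can lie entirely in $Z$, by the unique continuation just established applied disk-by-disk, so the generic fiber is $0$-dimensional), while it must have dimension exactly $2$ and positive $\mathcal H^2$-measure because $\alpha$, being a nontrivial harmonic self-dual form, has $[\alpha] \ne 0$ forcing $Z$ to be nonempty and in fact to carry nontrivial intersection with generic disks (otherwise $\Lambda_J^-$ would be trivial with a nowhere-zero section, contradicting $c_1(\Lambda_J^-) = -c_1(K_J^{-1}) \ne 0$ when $Z$ would be empty — more carefully, emptiness of $Z$ is ruled out because it would make $\alpha$ a nowhere-zero section of $\Lambda_J^-$, while $Z$ being a proper nonempty closed subset of dimension $< 2$ is incompatible with the disk-slicing structure once we know generic slices meet $Z$ in a nonzero count). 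Finally, compactness of $Z$ is immediate since $Z$ is closed in the compact manifold $M$, and covering $M$ by finitely many such balls $B$ patches the local Hausdorff-measure bounds into the global statement $\mathcal H^2(Z) < \infty$.

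The main obstacle I expect is establishing the Cauchy–Riemann inequality $|\bar\partial_J f| \le c|f|$ for the trivialized form with a \emph{good enough} trivialization — in the non-integrable setting $J\alpha$ need not be closed, so $\alpha + iJ\alpha$ is not literally holomorphic, and one must carefully extract from $d\alpha = 0$ alone (together with the algebraic constraint that $\alpha \in \Omega_J^-$) the right first-order system, absorbing the torsion of $J$ and the connection terms of the trivialization into the lower-order coefficient $a$. Once that inequality is in place with locally bounded $a$, the unique continuation, the similarity principle, the uniform bound on slice multiplicities, and the slicing estimate for $\mathcal H^2$ are all by now standard and follow the template of \cite{T} and \cite{ZintJ}.
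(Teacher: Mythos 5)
Your architecture matches the paper's (trivialise $\Lambda_J^-$ so that the coefficient of $\alpha$ satisfies a Cauchy--Riemann type condition, invoke the similarity principle, foliate by $J$-holomorphic disks, use unique continuation, and count zeros on slices), but two steps, as written, do not work. First, the claim that ``no disk can lie entirely in $Z$, by the unique continuation just established applied disk-by-disk'' is not a proof: Lemma \ref{noopen} only forbids $\alpha$ from vanishing on an \emph{open} subset of $M$, whereas a leaf of the foliation contained in $Z$ is a codimension-two set, and the disk-wise dichotomy (identically zero or isolated zeros) explicitly allows identical vanishing on a leaf. This is precisely where the paper's proof does its real work: one shows that infinitely many leaves (or infinitely many complex tangent directions at a point) inside $Z$ would, after refoliating transversally, force an open subset of $M$ into $Z$, contradicting Lemma \ref{noopen}; only then can the Gaussian chart be rotated so that no leaf lies in $Z$. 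Second, the concluding step ``fibers finite with uniformly bounded count, and integrating gives $\mathcal H^2(Z\cap B)<\infty$'' uses the co-area/Eilenberg inequality in the wrong direction: that inequality bounds the integral of slice counts \emph{by} $\mathcal H^2(Z)$, and uniformly finite fibers of a projection do not by themselves bound $\mathcal H^2$ (already graphs of merely continuous functions show slice counts control nothing). The paper instead runs a Vitali covering argument: a uniform bound $N$ on the slice counts (obtained from upper semicontinuity of the count, not from an unspecified ``degree/energy argument'') caps how many disjoint $\varepsilon$-balls centred on $Z$ can meet a given leaf, each such intersection has area $O(\varepsilon^2)$ by the distortion bound, and integrating over the leaf parameter yields at most $C'\varepsilon^{-2}$ balls, hence finite $\mathcal H^2$.

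Two further points. The analytic heart --- that in a suitable trivialisation the coefficient satisfies $|\bar\partial f|\le c|f|$ --- is exactly the paper's Lemma \ref{intJdisk2}, and you defer it as ``the main obstacle''; it is true, and can be proved either as in the paper (coordinates with $J=J_0$ along the disk and a basis $\phi_0$ pulling back to zero on it, then contracting $d\alpha=0$ with the normal directions) or, for your ball-wide version, by noting that on a $4$-manifold $d$ maps $(2,0)$-forms into $(2,1)\oplus(1,2)$ with the $(1,2)$-part algebraic in the Nijenhuis tensor, so $d\alpha=0$ with $\alpha=\Re(F\Omega_0)$ gives $\bar\partial F\wedge\Omega_0=-F\,\bar\partial\Omega_0-\bar F\,\overline{\bar\mu\Omega_0}$; note the zeroth-order term involves both $F$ and $\bar F$, which is why the trivialisation produced by the similarity principle depends on $\alpha$ (Remark \ref{!car}). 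Finally, your argument that $Z$ is nonempty with positive $\mathcal H^2$ because $c_1(\Lambda_J^-)\ne 0$ is false in general: on a K3 surface the real part of the holomorphic $(2,0)$-form is a closed $J$-anti-invariant form with empty zero set. Neither the paper's proof nor the substance of the proposition (finiteness of $\mathcal H^2(Z)$, hence Hausdorff dimension at most $2$) requires such a claim.
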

\begin{remark}
Since every closed $J$-anti-invariant form is a harmonic $g$-self-dual form for a compatible $g$, it follows from \cite{bar} that the zero locus $Z=\alpha^{-1}(0)$ is a countably $2$-rectifiable set. Recall that a subset of an $n$-dimensional Riemannian manifold $M$ is called countably $k$-rectifiable if it can be written as a countable union of sets of the form $\phi(X)$, where $X\subset \mathbb R^k$ is bounded and $\phi: X\rightarrow M$ is a Lipschitz map. However, it is not clear whether such a set would have finite $2$-dimensional Hausdorff measure by \cite{bar}. 
\end{remark}

Considering $\alpha$ as a smooth section of the bundle $\Lambda^-_J$ the compactness of $Z$ follows immediately from the continuity of $\alpha$ since a closed subset of a compact space is compact. Hence we can cover $Z$ by finitely many balls $B_i$. To prove Proposition \ref{hausdorff}, we need to show that $C\epsilon^{-2}$ many $\epsilon$-balls will be enough to cover each $B_i$. These balls $B_i$ may be taken small enough such that they are foliated by $J$-holomorphic disks as recalled in the following.

\subsection{Coordinates charts provided by disks foliation}

Fixing $x \in M$, we can find a neighbourhood $U$ of $x$ and a non-degenerate 2-form $\Omega$ on $U$ such that $J$ is compatible with $\Omega$ in $U$. This pair $(\Omega,J)$ induce an almost Hermitian metric on $U$. Now we can identify a geodesic ball centred at $x$ with a ball in $\R^4$ centred at the origin. Identify $\R^4 = \C^2$ such that
\begin{equation*}
\Omega_x = \omega_0 = dx^1 \wedge dx^2 + dx^3 \wedge dx^4 = \frac{i}{2} \left( dw^0 \wedge d\bar{w}^0 + dw^1 \wedge d\bar{w}^1 \right), 
\end{equation*}
with complex coordinates $(w^0,w^1)=(x^1, x^2, x^3, x^4)$. Such coordinates wil be called Gaussian coordinates centred at $x$. 
So we may as well assume that $J$ is an almost complex structure on $\C^2$ which agrees with the standard complex structure $J_0$ at the origin.

Let $D\subset\mathbb C$ be the open disk of radius $\rho$ and $D_w := \{ (\xi,w) | |\xi| < \rho \}\subset \mathbb C^2$, where $w \in D$. We will write $\bar D_w$ (resp. $\bar D$) for the closure of $D_w$ in $\mathbb C^2$ (resp. $D$ in $\mathbb C$). Now Lemma 5.4 of \cite{T} yields a diffeomorphism $Q : D \times D \rightarrow \C^2$, such that 
\begin{itemize}
\item $\forall w \in D$, $Q(D_w)$ is a $J$-holomorphic submanifold containing $(0,w)$, 
\item $\forall w \in D$, there exists $z$ depending only on $\Omega$ and $J$ such that
\begin{equation*}
|(\xi,w) - Q(\xi,w)| \leq z \cdot \rho \cdot |\xi|,
\end{equation*}
\item $\forall w \in D$, the derivatives of order $m$ of $Q$ are bounded by $z_m \cdot \rho$, where $z_m$ depends only on $\Omega$ and $J$.
\end{itemize}
Such diffeomorphisms shall be called $J$-fibre-diffeomorphisms. It is important to remark that we can change the direction of these disks by rotating the original Gaussian coordinate chart chosen. More precisely given $\kappa \in \mathbb CP^1$ we can choose $Q$ such that $Q(D_0)$ is tangent at the origin to the line determined by $\kappa$.

Let $u:D \rightarrow M$ be an embedded $J$-holomorphic disk with $x=u(0)$. We can further choose the coordinate system such that the almost complex structure $J$ behaves particularly well along the image $u(D)$, specified below. This is essentially a reformulation of the construction on page 903 of \cite{T}. This will be used in Lemma \ref{intJdisk2} below.

Let $(\xi,w)$ be the coordinates associated with the above $Q$. Since the disks of constant $w$ are $J$-holomorphic the almost complex structure $J$ must decompose, with respect to the splitting $T(D \times D) = TD \oplus TD = \R^2 \oplus \R^2$, as follows:
\begin{equation*} 
J = \left( \begin{array}{cc}
a & b \\
0 & a'
\end{array} \right)
\end{equation*}
Here $a,a',b$ are $2 \times 2$ matrix valued functions on $D \times D$ such that the condition $J^2 = - I$ is satisfied.

We can further choose coordinates $(\xi_1, w_1)$ such that $u(D)$ is just $\xi_1=0$, at least locally near $x$. Indeed as remarked previously we can choose the direction of the foliation such that $Q(D_0)$ intersects $u(D)$ transversally at $u(0)$. The transversality condition facilitates the application of the implicit function theorem to find, after shrinking $D$ if necessary, a smooth map $\tau : D \rightarrow \R^2$ such that $\tau(0)=0$ and $u(w)=Q(\tau(w),w)$. We let $(\xi_1, w_1) =(\xi - \tau(w),w)$. Thus, in the $(\xi_1, w_1)$ coordinates, the matrix $b$ obeys $b(0, \cdot)=0$.  
We can make a further change to coordinates $(\xi_2,w_2):=(g_1(\xi_1, w_1)\cdot \xi_1, g_2(w_1))$, for suitable smooth matrix value functions $g_1$ and $g_2$ such that,  in addition to the general requirement $J^2=-I$, we have

\begin{equation*}
a\equiv \left( \begin{array}{cc}
0 & 1 \\
-1 & 0 
\end{array} \right) \hbox{ and }\,\, 
a'(0, \cdot)\equiv \left( \begin{array}{cc}
0 & 1 \\
-1 & 0 
\end{array} \right).
\end{equation*}

To summarize, the discussion above allows us to take coordinates in a neighbourhood of $u(0)$ such that $J=J_0$ along $u(D)$. Later, we will denote such coordinates, $(w_2,\xi_2)$, by $(x^1, x^2, x^3, x^4)$ so that $u(D)$ is described by $x^3\!=\!x^4\!=\!0$ near $u(0)$.

\subsection{$J$-anti-invariant $2$-forms}
We continue assuming $u:D \rightarrow M$ is an embedded $J$-holomorphic disk and $U$ is a neighbourhood of $u(0)$ with the coordinates described above.  On $u(D) \cap U$ define 
\begin{equation} \label{phi}
\phi_0|_{u(D) \cap U} := dx^1 \wedge dx^3 - dx^2 \wedge dx^4.
\end{equation}
This is $J$-anti-invariant. We notice that 
\begin{equation} \label{phi'}
-J\phi_0|_{u(D) \cap U} = dx^1 \wedge dx^4 + dx^2 \wedge dx^3,
\end{equation}
where this latter $J$ now refers to the almost complex structure on $\Lambda_J^-$.

We can extend $\phi_0$ to a section of $\Lambda^-_J$ on $U$. Indeed, by shrinking $U$ if necessary, we may assume that $\Lambda^-_J$ is trivialised over $U$. So we can take a local basis of $\Lambda_J^-$, say $\psi, J \psi$. On $u(D)$ there are functions $h_1,h_2$ such that
\begin{equation*}
\phi_0|_{u(D) \cap U} = h_1 \psi |_{u(D) \cap U}+ h_2 J \psi |_{u(D) \cap U}.
\end{equation*}
Now to extend $\phi_0$ we choose any non-zero smooth extensions of $h_1$ and $h_2$ to $U$.

The foliations described above reduce the study of $Z$ to its intersection with embedded $J$-holomorphic disks. To study such intersections we need to produce an appropriate local trivialisation of $\Lambda_J^-$.

Using the almost complex structure on $\Lambda_J^-$ we can locally choose an orthogonal basis, say, $\phi, J\phi$. We write the $J$-anti-invariant form $\alpha$ locally in terms of this basis, $\alpha=f\phi+gJ\phi$, where $f$ and $g$ are smooth functions.

Lemma \ref{intJdisk2} below establishes a trivialisation for $\Lambda_J^-$ in which $\alpha$ is holomorphic over an embedded $J$-holomorphic disk in terms of the chosen basis. This allows us to establish that if a given embedded $J$-holomorphic disk intersects the zero set non-trivially then the intersection is a finite number of isolated points, and that these intersections are positive. Furthermore these intersections are positive.

\begin{lemma} \label{intJdisk2}
Let $(M,J)$ be an almost complex $4$-manifold and $u:D \rightarrow M$ a smooth, embedded $J$-holomorphic disk. Then for any closed, $J$-anti-invariant $2$-form $\alpha$ there exists a neighbourhood $U \subset M$ of $u(0)$ and a nowhere vanishing $\phi \in \Omega_J^-(U)$ such that for $\alpha$ expressed in terms of the basis $\{\phi, J\phi\}$
\begin{equation} \label{al}
\alpha = f\cdot  \phi + g \cdot J \phi
\end{equation}
 on $U$, the function $(f \circ u) + i (g \circ u)$ is holomorphic  on $u^{-1}(u(D)\cap U)$.
\end{lemma}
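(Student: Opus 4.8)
The plan is to reduce the problem to a concrete local calculation in the preferred coordinates constructed just before the statement, namely coordinates $(x^1,x^2,x^3,x^4)$ in a neighbourhood $U$ of $u(0)$ in which $J = J_0$ along $u(D)$ and $u(D)$ is cut out by $x^3 = x^4 = 0$. First I would take the basis section $\phi := \phi_0$ extended to $U$ as described in the preamble, together with $J\phi$ given by $-J\phi_0 = dx^1\wedge dx^4 + dx^2 \wedge dx^3$ along $u(D)$. Writing the closed form $\alpha = f\phi + g J\phi$ on $U$, the task is to show that $F := (f\circ u) + i(g\circ u)$ is holomorphic on $u^{-1}(u(D)\cap U) \subset D$. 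Since $u$ is $J$-holomorphic and $J = J_0$ along $u(D)$, the complex coordinate on $D$ can be identified (after a biholomorphism) with $z = x^1 + i x^2$, so holomorphicity of $F$ means $\partial_{\bar z} F = 0$, i.e. the Cauchy–Riemann equations $\partial_{x^1} f = \partial_{x^2} g$ and $\partial_{x^2} f = -\partial_{x^1} g$ hold at points of $u(D)$.

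The key input is the closedness $d\alpha = 0$. I would expand $d(f\phi + gJ\phi)$ and evaluate it along $u(D)$, where everything simplifies because $J = J_0$ there and $\phi, J\phi$ take the model expressions \eqref{phi} and \eqref{phi'}. The point is that $d\phi_0$ and $d(-J\phi_0)$ vanish along $u(D)$ to the relevant order — or more precisely that the components of $d\phi$ and $d(J\phi)$ that pair against the tangent directions $\partial_{x^1}, \partial_{x^2}$ of $u(D)$ vanish there — so that the vanishing of $d\alpha$ forces precisely the Cauchy–Riemann relations on $f$ and $g$ restricted to $u(D)$. Concretely, $dx^1\wedge dx^3 - dx^2\wedge dx^4$ and $dx^1\wedge dx^4 + dx^2\wedge dx^3$ are the real and imaginary parts of $dz \wedge d\zeta$ where $z = x^1 + ix^2$, $\zeta = x^3 + i x^4$; writing $\alpha = \mathrm{Re}\big((f+ig)\,dz\wedge d\zeta\big)$ modulo terms that vanish on $u(D)$, closedness gives $d\big((f+ig)\,dz\wedge d\zeta\big) \equiv 0$ along $u(D)$, and extracting the $d\bar z \wedge dz \wedge d\zeta$ component yields $\partial_{\bar z}(f + ig) = 0$ there, which is exactly the claim.

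There are two technical points to handle carefully. The first is to justify that the extension of $\phi_0$ off $u(D)$ — chosen merely as a nonvanishing section, with the $h_1, h_2$ of the preamble extended arbitrarily — does not spoil the argument: I would note that holomorphicity of $F$ is a statement about the restriction of $f, g$ and their first derivatives to $u(D)$, and that changing the extension of $\phi$ changes $f, g$ by multiplication by a nonvanishing complex-valued function whose restriction to $u(D)$ I can further normalise (e.g. so that $\phi = \phi_0$ exactly along $u(D)$), after which the derivative computation only sees the values and first derivatives of $\alpha$ and of the model forms along $u(D)$. The second point is controlling the error terms: one needs that $J$ agrees with $J_0$ along $u(D)$ to first order in the normal directions sufficiently well that $d\phi$, $d(J\phi)$ have no component along $(\text{tangent})\wedge(\text{tangent})$ directions on $u(D)$; this is precisely the content of the coordinate normalisation from \cite{T} recalled above (the conditions $b(0,\cdot) = 0$, $a \equiv J_0$, $a'(0,\cdot) \equiv J_0$), so I would invoke that and check the needed components vanish.

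I expect the main obstacle to be the bookkeeping in the second step: carefully organising which components of $d\alpha = 0$ survive restriction to $u(D)$ and verifying that the model/error structure of $\phi$, $J\phi$, and $J$ in the normalised coordinates kills exactly the unwanted terms, leaving only the Cauchy–Riemann system. Once the coordinates are set up as in the preamble this is a finite, if somewhat delicate, computation; the conceptual content is simply that closedness of a $J$-anti-invariant form, read in good coordinates along a $J$-holomorphic disk, is the Cauchy–Riemann equation for its components — the almost-complex analogue of the fact that a closed $(2,0)$-form on a complex surface is holomorphic.
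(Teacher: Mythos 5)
There is a genuine gap, and it sits exactly where you flag "the main obstacle". You take $\phi=\phi_0$ (the model extension of \eqref{phi}) and claim that closedness forces the honest Cauchy--Riemann equations for $(f\circ u)+i(g\circ u)$, on the grounds that the components of $d\phi_0$, $d(J\phi_0)$ pairing against the tangent directions of $u(D)$ vanish there, this being "precisely the content of the coordinate normalisation". It is not. Contracting $d\alpha=0$ with $\partial_3,\partial_4$ and pulling back along $u$ picks out the $dx^1\wedge dx^2\wedge dx^3$ and $dx^1\wedge dx^2\wedge dx^4$ components of $d\phi_0$ and $d(J\phi_0)$ along $u(D)$. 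After the tangential terms drop out (the restricted coefficients are constant along the disk), what survives is the normal derivatives $\partial_3(\phi_0)_{12},\ \partial_4(\phi_0)_{12}$, etc., of the $dx^1\wedge dx^2$ coefficient. Along $u(D)$ every $J$-anti-invariant form has vanishing $dx^1\wedge dx^2$ coefficient (since $J=J_0$ there), so these normal derivatives are dictated by the $1$-jet of $J$ in the directions $x^3,x^4$; they are unaffected by your freedom in extending $h_1,h_2$ (the correction terms are multiplied by $\psi_{12}=(J\psi)_{12}=0$ on $u(D)$), and they cannot be normalised away: Taubes' coordinates give $a\equiv J_0$, $b(0,\cdot)=0$, $a'(0,\cdot)=J_0$, i.e. $J=J_0$ \emph{along} $u(D)$ only, not to first order in the normal directions --- removing the first-order normal variation of $J$ is obstructed by non-integrability. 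So closedness only yields a Cauchy--Riemann \emph{type} system $\bar\partial F_0+C_1F_0+C_2\bar F_0=0$ for $F_0=\tilde f_0+i\tilde g_0$, with genuinely nonzero zeroth-order terms $\tilde f_0\tilde\beta_{12}-\tilde g_0\tilde\gamma_{12}$ and $\tilde f_0\tilde\beta'_{12}-\tilde g_0\tilde\gamma'_{12}$; your claimed $\partial_{\bar z}F=0$ in the basis $\{\phi_0,J\phi_0\}$ is false in general.

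The missing idea is the Carleman Similarity Principle (Theorem \ref{carleman}): from the CR-type system one factors $F_0=\Phi F$ with $\Phi$ nowhere zero and $F$ holomorphic, and then one \emph{twists the basis}, setting $\phi:=(\Phi_1\circ u^{-1})\phi_0+(\Phi_2\circ u^{-1})J\phi_0$ along $u(B_\delta)$ and extending, so that in the new basis $f\circ u+i\,g\circ u=F$ is holomorphic. In particular the nowhere vanishing $\phi$ asserted in the lemma cannot be taken to be the model $\phi_0$: since $C_2\neq 0$ in general, $\Phi$ and hence $\phi$ depend on $\alpha$ (Remark \ref{!car} and the remark following the proof), which is exactly why no divisor-to-section correspondence comes out of this argument. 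Your closing "conceptual content" --- that closedness read in good coordinates along a $J$-holomorphic disk \emph{is} the Cauchy--Riemann equation for the components, as for closed $(2,0)$-forms on a complex surface --- is precisely the statement that fails when $J$ is non-integrable; the whole point of the lemma is that it can be repaired only after an $\alpha$-dependent change of trivialisation.
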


We will first write $\alpha$ with respect to the local basis $\phi_0$ and show that the coefficients satisfy a Cauchy-Riemann type equation. From this point an application of the Carleman Similarity Principle allows us to find a local basis whose coefficients are holomorphic. We only state a weak version which is enough for our application. 

\begin{theorem} \label{carleman}
Let $p>2$ and $B_{\varepsilon} \subset \C$ for some $\varepsilon >0$. Suppose that $C_1, C_2 \in L^{\infty}(B_{\varepsilon}, \mathbb C)$ and $v \in W^{1,p}(B_{\varepsilon}, \C)$ is a solution to
\begin{equation}  \label{crsys}
\bar\partial v(z) + C_1(z)v(z) +C_2(z)\bar v(z)=0.
\end{equation}
Then, for a sufficiently small $\delta >0$, there exist functions $\Phi \in C^0(B_{\delta}, \C)$ and $\sigma\in C^{\infty}( B_{\delta}, \C)$ such that $\Phi(z)$ is nowhere zero and
\begin{equation*}
v(z)=\Phi(z) \sigma(z), \quad \bar\partial\sigma=0.
\end{equation*}
\end{theorem}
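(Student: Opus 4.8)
The plan is to recognise \eqref{crsys} as a perturbed $\bar\partial$-equation and to run the classical argument for the similarity principle going back to Vekua and Bers: absorb the antilinear term into a bounded linear coefficient, solve an auxiliary $\bar\partial$-problem by the Cauchy transform, and then exponentiate. Since $p>2$, Sobolev embedding already gives $v \in C^0(B_{\varepsilon}, \C)$, so the pointwise manipulations below are legitimate.

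First I would reduce \eqref{crsys} to a genuinely linear equation by defining
\begin{equation*}
C(z) = \begin{cases} C_1(z) + C_2(z)\, \dfrac{\overline{v(z)}}{v(z)}, & v(z) \neq 0, \\[2mm] C_1(z), & v(z) = 0. \end{cases}
\end{equation*}
Wherever $v(z) \neq 0$ one has $|\overline{v(z)}/v(z)| = 1$, so $|C| \le |C_1| + |C_2|$ and hence $C \in L^{\infty}(B_{\varepsilon}, \C)$. Moreover the identity $C(z) v(z) = C_1(z) v(z) + C_2(z)\overline{v(z)}$ holds for \emph{every} $z$ (both sides vanish where $v=0$), so $v$ solves the linear equation $\bar\partial v + C v = 0$ on $B_{\varepsilon}$.

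Next, on a sufficiently small ball $B_{\delta}$ I would solve the inhomogeneous equation $\bar\partial s = -C$. Because $C \in L^{\infty} \subset L^q$ for every $q<\infty$, the Cauchy (Pompeiu) transform
\begin{equation*}
s(z) = -\frac{1}{\pi}\int_{B_{\delta}} \frac{C(w)}{z-w}\, dA(w)
\end{equation*}
produces a solution with $s \in W^{1,q}(B_{\delta})$ for all $q<\infty$; taking $q>2$ and invoking Sobolev embedding shows $s \in C^0(B_{\delta})$ (indeed Hölder). Setting $\Phi := e^{s}$ gives a continuous, nowhere-vanishing function, and I put $\sigma := v\,\Phi^{-1} = v\, e^{-s}$, which is again continuous. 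A direct computation using $\bar\partial v = -Cv$ and $\bar\partial s = -C$ gives, in the distributional sense,
\begin{equation*}
\bar\partial \sigma = (\bar\partial v)\, e^{-s} - v\,(\bar\partial s)\, e^{-s} = -Cv\,e^{-s} + Cv\,e^{-s} = 0.
\end{equation*}
Thus $\sigma$ is a weak solution of $\bar\partial \sigma = 0$, and by Weyl's lemma (elliptic regularity for $\bar\partial$) it is in fact holomorphic, so $\sigma \in C^{\infty}(B_{\delta}, \C)$. This yields the factorisation $v = \Phi \sigma$ with $\Phi$ continuous and nowhere zero.

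The main obstacle is the low-regularity bookkeeping: one must verify that $C\in L^{\infty}$ (which is precisely where the unit modulus of $\bar v/v$ is used), that the Cauchy transform delivers a genuinely $C^0$ antiderivative (which is where the hypothesis $p>2$, and the resulting continuity of $v$, is essential), and above all that the two $\bar\partial$-identities combine \emph{as distributions} rather than merely pointwise, so the product-rule computation of $\bar\partial\sigma$ is valid weakly. Once these points are handled, the concluding elliptic regularity step is standard.
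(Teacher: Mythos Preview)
Your argument is correct and is precisely the standard Vekua--Bers proof of the similarity principle; the paper does not supply its own proof but simply declares the result standard and refers to \cite{ST}, where essentially this same argument appears. The low-regularity issues you flag (measurability and boundedness of $C$, continuity of the Cauchy transform, and the distributional product rule for $\bar\partial(ve^{-s})$) are the only real subtleties, and you have handled them adequately.
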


\begin{remark}\label{!car}
If $C_2=0$ then the transformation $\Phi$ can be found to depend only on $C_1$. 
But in the general case, $\Phi$ will depend on $v$. This implies for different $\alpha$, we would have different diffeomorphisms to realise the trivialisations in Lemma \ref{intJdisk2}. Hence, our argument does not imply the same statement of Lemma \ref{intJdisk2} when we replace $\alpha$ by linear combinations of $\beta$ and $J\gamma$ where  $\beta$ and $\gamma$ are closed $J$-anti-invariant forms. 
This is essentially the reason that our argument would not lead to divisor-to-form correspondence for $J$-anti-invariant forms and their divisors even for tamed $J$. In fact, there are linearly independent $J$-anti-invariant $2$-forms for non-integrable tamed $J$ on a K3 surface \cite{DLZ2}. 
We thank Tedi Draghici for reminding us these examples. 
\end{remark}

The proof of Theorem \ref{carleman} is standard, see {\it e.g.} \cite{ST}. We now use this theorem to prove Lemma \ref{intJdisk2}.

\begin{proof}[Proof of Lemma \ref{intJdisk2}]
Take $\phi_0$ to be the extension of \eqref{phi} described above and write $\alpha = f_0 \phi_0 + g_0 J \phi_0$. Since $\alpha$ is closed, we must have
\begin{equation} \label{clsd}
0=d\alpha = df_0 \wedge \phi_0 + f_0 \, d \phi_0 + dg_0 \wedge J \phi_0 + g_0 \, d(J \phi_0).
\end{equation}

First remark that the subsequent equalities follow from the definition of $\phi_0$,
\begin{align*}
u^*(\partial_3 \inmult \phi_0) &= - ds =  u^*(\partial_4 \inmult (-J \phi_0)), \\
u^*(\partial_4 \inmult \phi_0) &=  dt = u^*(\partial_3 \inmult (J \phi_0)),
\end{align*}
where $z=s+it$ are holomorphic coordinates on $(D, J_0)$ centred at the origin such that $J_0 ds = dt$.

By contracting \eqref{clsd} with $\partial_3$ and pulling back along $u$ we obtain the first of the following expressions of 2-forms on $u^{-1}(U)$. The second is obtained by contracting with $\partial_4$ instead. Using tildes to denote quantities which have been pulled back to $D$ we obtain
\begin{align*}
d\tilde{f}_0 \wedge ds + \tilde{f}_0 \tilde{\beta} - d\tilde{g}_0 \wedge dt - \tilde{g}_0 \tilde{\gamma} = - u^* \left[ \frac{\partial f_0}{\partial x^3} \phi_0 - \frac{\partial g_0}{\partial x^3} J \phi_0 \right] &=0, \\
-d\tilde{f}_0 \wedge dt + \tilde{f}_0 \tilde{\beta}' - d\tilde{g}_0 \wedge ds - \tilde{g}_0 \tilde{\gamma}' = - u^* \left[ \frac{\partial f_0}{\partial x^4} \phi_0 - \frac{\partial g_0}{\partial x^4} J \phi_0 \right]  &=0,
\end{align*}
where $\beta:=\partial_3 \inmult d\phi_0$, $\gamma:=\partial_3 \inmult dJ\phi_0$, $\beta':=\partial_4 \inmult d\phi_0$ and $\gamma':=\partial_4 \inmult dJ\phi_0$. The second equality on each line follows from $u^* \phi_0 = u^* J \phi_0 =0$.

For 1-forms $\eta, \lambda$ on $D$ we have the identity $\eta \wedge J_0\lambda = - J_0\eta \wedge \lambda$. Thus we can rewrite the equations above as,
\begin{align*}
\left( d \tilde{f}_0 + J_0d \tilde{g}_0 \right) \wedge ds =   -\tilde{f}_0 \tilde{\beta} + \tilde{g}_0 \tilde{\gamma}, \\
\left( d \tilde{f}_0 + J_0d \tilde{g}_0 \right) \wedge dt =  \tilde{f}_0 \tilde{\beta}' - \tilde{g}_0 \tilde{\gamma}'. 
\end{align*} 
Or equivalently in terms of components with respect to the coordinates $z=s+it$ on $D$,
\begin{align*}
\frac{\partial \tilde{f}_0}{\partial t} + \frac{\partial \tilde{g}_0}{\partial s}  &=   -\tilde{f}_0 \tilde{\beta}_{12} + \tilde{g}_0 \tilde{\gamma}_{12} \\
\frac{\partial \tilde{f}_0}{\partial s}  - \frac{\partial \tilde{g}_0}{\partial t}  &=   \tilde{f}_0 \tilde{\beta}'_{12} - \tilde{g}_0 \tilde{\gamma}'_{12}.
\end{align*}
This is a Cauchy-Riemann type equation for $\tilde{f}_0 + i \tilde{g}_0$.

By Theorem \ref{carleman} there exists a $\delta >0$, a nowhere zero function $\Phi : B_{\delta} \rightarrow \C$ and a holomorphic function $F: B_{\delta} \rightarrow \C$ such that
\begin{equation}\label{bc}
F_0 = \Phi F, 
\end{equation}
where $F_0=\tilde{f}_0 + i \, \tilde{g}_0$. Henceforth we write $F=\tilde{f}+i\, \tilde{g}$ and $\Phi=\Phi_1 + i\, \Phi_2$.

Define $$\phi|_{u(B_{\delta})} := (\Phi_1 \circ u^{-1})\cdot \phi_0 + (\Phi_2 \circ u^{-1})\cdot  J\phi_0$$ and thus $$J\phi|_{u(B_{\delta})}=-(\Phi_2\circ u^{-1})\cdot \phi_0+(\Phi_1\circ u^{-1})\cdot J\phi_0.$$ These are nowhere vanishing $J$-anti-invariant forms on $u(B_{\delta})$. Extending them to a neighbourhood of $u(0)$ in $M$ we can thus write
\begin{equation*}
\alpha = f \phi + g J\phi,
\end{equation*}
for some smooth functions $f,g : M \rightarrow \R$. By restricting to $u(B_{\delta})$ and applying Equation \eqref{bc}, we have $f \circ u + i \, g \circ  u = F$. The conclusion follows since $F$ is holomorphic.
\end{proof}

\begin{remark}
Above we applied Theorem \ref{carleman} to a Cauchy-Riemann equation whose zeroth order term, in the complex form, is not a multiple of $\tilde{f}_0 + i \tilde{g}_0$. 
Thus the basis $\{\phi, J\phi\}$ found in the lemma will depend on $\alpha$ by Remark \ref{!car}.
\end{remark}

The next lemma establishes a unique continuation result for $Z=\alpha^{-1}(0)$. The result is well-known for self-dual harmonic forms \cite{bar}, and alternately it can be regarded as a corollary to Lemma \ref{intJdisk2} (\textit{c.f.} proof of the upcoming Lemma \ref{highnoopen}).

\begin{lemma} \label{noopen}
Suppose that $\alpha$ is a closed, $J$-anti-invariant $2$-form, then if $\alpha \equiv 0$ on some open set in $M$, it must vanish identically on the whole of $M$. 
\end{lemma}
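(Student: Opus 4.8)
\textbf{Proof proposal for Lemma \ref{noopen}.} The plan is to reduce the statement to the unique continuation property of holomorphic functions by using the local model established in Lemma \ref{intJdisk2}, together with the foliation-by-disks construction recalled above. Suppose $\alpha \equiv 0$ on a non-empty open set $V \subset M$. Since $M$ is connected, it suffices to show that the interior of $Z$, call it $W$, is closed in $M$: then $W$ is open, closed, and non-empty, hence all of $M$. So let $x_0 \in \partial W$ be a boundary point of $W$; I want to show $\alpha$ vanishes on a neighbourhood of $x_0$, which gives $x_0 \in W$ and the desired closedness.

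First I would fix such a boundary point $x_0$ and pick a point $x_1 \in W$ very close to $x_0$, so that a small ball $B$ around $x_0$ on which we have the good coordinates is foliated by embedded $J$-holomorphic disks in such a way that at least one leaf, say $u(D)$ with $u(0)$ near $x_1$, meets the open set $W$ along an open subset. By Lemma \ref{intJdisk2} applied to this leaf $u$, there is a neighbourhood $U$ of $u(0)$ and a nowhere-vanishing $\phi \in \Omega_J^-(U)$ so that, writing $\alpha = f\phi + gJ\phi$, the function $(f\circ u)+i(g\circ u)$ is holomorphic on $u^{-1}(u(D)\cap U)$. Since $\alpha \equiv 0$ on the open subset of $u(D)$ lying in $W$ and $\phi, J\phi$ are nowhere zero, $(f\circ u)+i(g\circ u)$ vanishes on a non-empty open subset of the connected domain $u^{-1}(u(D)\cap U)$; by the identity theorem for holomorphic functions it vanishes identically there. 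Hence $\alpha \equiv 0$ on all of $u(D)\cap U$.

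The next step is to propagate this vanishing from a single leaf to the whole foliated neighbourhood. For each leaf $u_w$ of the foliation (with $w$ ranging over the transverse disk $D$), applying Lemma \ref{intJdisk2} again we get a holomorphic function $F_w = (f_w\circ u_w)+i(g_w\circ u_w)$ on each leaf. The key point is that these functions vary continuously (indeed smoothly) in the parameter $w$: the foliation, the trivialising sections $\phi, J\phi$, and the transformations $\Phi$ from the Carleman Similarity Principle can all be taken to depend smoothly on $w$ after shrinking, because they are built from $J$, $\Omega$, and $\alpha$ by the implicit function theorem and elliptic estimates with parameters. Since $\alpha$ vanishes identically on the leaf through $x_1$ and, by openness of $W$, on all leaves through a small open set of parameters near the corresponding $w_1$, the zero locus of $\alpha$ inside the foliated block is relatively open \emph{and} it is a union of leaves each on which $\alpha$ either vanishes identically or has isolated zeros (by Lemma \ref{intJdisk2}). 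A leaf $u_w$ on which $\alpha$ is not identically zero has $Z \cap u_w(D)$ of measure zero in $u_w(D)$, so $Z$ cannot contain an open set meeting $u_w(D)$; therefore the interior of $Z$ is saturated by leaves, and "vanishes identically on a leaf" is both an open and a closed condition on $w$ (closed because a uniform limit of identically-zero holomorphic functions is zero). By connectedness of the parameter disk $D$, $\alpha$ vanishes identically on the whole foliated neighbourhood of $x_0$, so $x_0 \in W$.

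The main obstacle is the last paragraph: making precise that the local trivialisations and the Carleman transformations can be chosen to depend continuously on the leaf parameter $w$, so that "$\alpha \equiv 0$ on leaf $w$" is a closed condition. One clean way around fussing over the regularity of $\Phi$ in $w$ is to argue directly with $Z$: the set $Z$ is closed, its interior $W$ is open, and by the above each leaf is either contained in $W$ or meets it in a measure-zero set; hence $W$ is a union of whole leaves and its complement within the foliated block is also a union of whole leaves that is relatively closed — and on the "bad" leaves $\alpha$ has only isolated zeros while a boundary leaf between $W$ and its complement would have to be a limit of identically-vanishing leaves, forcing $\alpha$ to vanish on it too by continuity of $\alpha$. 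This avoids tracking $\Phi$ and reduces everything to the openness of $W$, the connectedness of the leaf space, and the dichotomy (identically zero / isolated zeros) supplied by Lemma \ref{intJdisk2}. Finally, once $\alpha$ is shown to vanish near every boundary point of $W$, connectedness of $M$ forces $W = M$, completing the proof.
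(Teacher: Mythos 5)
Your overall route (foliate, use Lemma \ref{intJdisk2} to make $\alpha$ leafwise holomorphic, and invoke the dichotomy ``identically zero on a leaf or isolated zeros'') is a legitimate alternative to the paper's proof, which is a one-liner: for compatible $g$ one has $\Lambda_J^-\subset\Lambda_g^+$, so $\alpha$ is a self-dual harmonic form and unique continuation for harmonic forms (\cite{bar}) applies; your route is instead the one the paper runs in higher dimensions (Lemma \ref{highnoopen}). However, your propagation step has a genuine gap. The open--closed argument in the leaf parameter $w$ requires that ``$\alpha$ vanishes identically on the leaf $u_w$'' be an \emph{open} condition in $w$, and neither of your two arguments establishes this. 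Smooth dependence of the Carleman data on $w$ does not give it: for a smooth family of holomorphic functions, identical vanishing is not open in the parameter (consider $F_w(z)=w$). In the fallback argument, the assertion that each leaf is ``either contained in $W$ or meets it in a measure-zero set,'' and hence that the bad leaves have only isolated zeros, conflates two different dichotomies: Lemma \ref{intJdisk2} tells you a leaf either has isolated zeros of $\alpha$ or lies entirely in $Z$, but a leaf lying in $Z$ need not lie in the $4$-dimensional interior $W$. What you actually get is: the set $G$ of parameters whose leaf meets $W$ is open, every such leaf vanishes identically, and leaves with parameters in $\overline{G}$ vanish by continuity --- but nothing forces $G$ (or the set of identically vanishing leaves) to be closed-and-open, so connectedness of the parameter disk does not finish. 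Concretely, if $W$ were shaped like a thin cusp with tip at $x_0$, then for any block centred at $x_0$ only a small proportion of the leaves meet $W$, and your argument stalls exactly at such boundary points; asserting openness there is essentially assuming the unique continuation you are trying to prove.

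The repair is not connectedness in $w$ but a geometric choice of base point and disk direction ensuring that \emph{every} leaf of one block meets $W$. For instance: pick $y\in W$ with $r:=d(y,M\setminus W)$ smaller than the uniform foliation scale, pick $x_*\in\partial W$ realizing this distance, and choose the $J$-fibre-diffeomorphism centred at $x_*$ with the disk direction containing the vector pointing from $x_*$ to $y$ and with radius $\rho\ll r$; then the error estimate $|(\xi,w)-Q(\xi,w)|\le z\rho|\xi|$ shows every leaf $Q(D_w)$ enters the ball $B(y,r)\subset W$, hence every leaf vanishes identically by your step 3, so $\alpha\equiv 0$ on the whole block and $x_*\in W$, contradicting $x_*\in\partial W$. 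This is the role played in the paper's Lemma \ref{highnoopen} by the requirement that each disk $Q(D_w)$ intersect the maximal vanishing set (a requirement that must be arranged by the choice of point and direction, not merely by shrinking $\rho$). Alternatively, simply quote the paper's actual proof of Lemma \ref{noopen}: closed $J$-anti-invariant forms are $g$-self-dual harmonic, and unique continuation for such forms is known.
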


\begin{proof} 
For any Riemannian metric $g$ compatible with $J$, we have $\Lambda_J^- \subset \Lambda_g^+$. In particular, any closed  $J$-anti-invariant $2$-form is a self-dual harmonic form. Hence any non-trivial, closed, $J$-anti-invariant $2$-form cannot vanish on an open subset of $M$. In fact, from \cite{bar} it is known that such zero sets have Hausdorff dimension $\le 2$. 
\end{proof}

\begin{remark} 
Such unique continuation results for closed $J$-anti-invariant forms are known in all dimensions by elliptic PDE methods, \textit{e.g.} \cite{HMT}.
\end{remark}

We now have all of the necessary ingredients to locally estimate the Hausdorff measure of the zero set $Z$ in Proposition \ref{hausdorff}. In particular, Lemma \ref{intJdisk2} serves the role of Lemma 2.2 of \cite{ZintJ}, {\it i.e.} Gromov's positivity of intersections of a $J$-holomorphic disk and a codimension two almost complex submanifold, in the following proof. 

\begin{proof}[Proof of Proposition \ref{hausdorff}]
This proof is almost identical to the the proof of Proposition 2.4 in \cite{ZintJ}.

First we should remark that since $M$ is compact the Hausdorff measure will be independent of the metric we use. Now for any $x \in Z$ we can find a $J$-fiber-diffeomorphism $Q^x$ of a neighbourhood of $x$ in $M$. By compactness we can choose finitely many of these diffeomorphisms, say $Q^{x_i}$, covering $Z$ and such that the disks are all of the same radius. We show that each $Z \cap Q^{x_i}(D \times D)$ has finite 2-dimensional Hausdorff measure.

Pick $x \in Z$ and write $Q$ for $Q^x$. For each $w$ we know that $Q(D_w)$ intersects $Z$ in finitely many points if it is not totally contained in $Z$ by Lemma \ref{intJdisk2}. We claim that there are only finitely many $w \in \bar{D}$ such that $Q(D_w) \subset Z$.

Suppose that this is not the case. Then we may assume without loss of generality that $0$ is an accumulation point of $w$. We now foliate a neighbourhood of $x$ by $J$-holomorphic disks transverse to $Q(D_0)$, whereby producing an open neighbourhood $M$ which is contained in $Z$. Since this contradicts Lemma \ref{noopen} we will then have the claim.

As before take Gaussian coordinates centred at $x$ but now so that $(0,w')$ is identified with $Q(D_0)$. We choose a $J$-fibre-diffeomorphism $Q':D' \times D' \rightarrow \C^2$, where $D'$ denotes the disk in $\C$ of radius $\rho' < \rho$, such that
\begin{itemize}
\item $\forall w' \in D'$, $Q'(D'_{w'})$ is a $J$-holomorphic submanifold containing $(0,w')$,
\item $\forall w' \in D'$, there exists $z$ depending only on $\Omega$ and $J$ such that
\begin{equation*}
|(\xi',w') - Q'(\xi',w')| \leq z \cdot \rho' \cdot |\xi'|,
\end{equation*}
\item $\forall w' \in D'$, the derivatives of order $m$ of $Q'$ are bounded by $z_m \cdot \rho'$, where $z_m$ depends only on $\Omega$ and $J$.
\end{itemize}
So all the disks $Q'(D'_{w'})$ are transverse to $Q(D_0)$. As being transverse is an open condition we have that $Q'(D'_{w'})$ are transverse to $Q(D_w)$ for all $|w|<\varepsilon$. Thus the intersection points of $Q'(D'_{w'})$ and $Z$ are not isolated and so, by Lemma \ref{intJdisk2}, $Q'(D'_{w'}) \subset Z$. So $Q'(D' \times D') \subset Z$ and since $Q'(D' \times D')$ covers an open neighbourhood of $x$ we have the desired contradiction.

Now we claim that $Q$ may be chosen so that none of the $J$-holomorphic disks are contained in $Z$. In fact we show that there are only finitely many complex directions of $T_xM$ such that there are $J$-holomorphic disks tangent to it and contained in $Z$. With this the claim follows by rotating the Gaussian coordinate system we chose initially. 

Suppose that there are infinitely many such directions. Since the directions in $T_xM$ are parametrised by $\mathbb CP^1$ there is at least one accumulative direction $v$. Choose the Gaussian coordinate system so that $Q(D_0)$ is transverse to $v$, and hence $Q(D_w)$ are transverse to $v$ for small $|w|< \varepsilon$. This is a contradiction with Lemma \ref{intJdisk2} and Lemma \ref{noopen} since the intersection numbers of $Q(D_w) \cap Z$ are infinite for $|w| < \varepsilon$.

Hence if we fix $x$ then we can choose a complex direction such that there is no $J$-holomorphic curve in $Z$ tangent to it. By the perturbative nature of $J$-fibre diffeomorphisms we can choose Gaussian coordinates and a $J$-fibre diffeomorphism so that no $Q(D_w)$ is contained in $Z$ for $w$ sufficiently close to $0$.

Finally we are able to estimate the Hausdorff measure of the compact set $Z \cap Q(\bar{D} \times \bar{D})$. First remark that, by shrinking $D$ if necessary, we may assume without loss of generality that the distortion of $Q$ on the domain $2D \times 2D$ is bounded by some constant $C>0$. Also note that, by our choice of $Q$, for each $w \in \bar{D}$ the set $Z \cap Q(\bar{D}_w)$ is a finite set of points.

Define,
\begin{equation*}
g: \bar{D} \rightarrow \N \cup \{0\}, \quad w \mapsto \#( Z \cap Q(\bar{D}_w)).
\end{equation*}
Clearly this is an upper semi-continuous function and hence achieves a maximal value, say $N$, at some point $w \in \bar{D}$.  Since each intersection point contributes positively by Lemma \ref{intJdisk2}, we know $Z\cap Q(\bar D_w)$ contains at most $N$ points for all $w\in \bar D$. By the Vitali covering lemma we can take a finite cover of the compact set $Z \cap Q(\bar{D} \times \bar{D})$ by balls of radius $\varepsilon$ such that $L$ of these balls are disjoint and the union of  $L$ concentric balls with radius dilated by a factor of $3$ cover. By our distortion assumption each $\varepsilon$ ball intersects $Q(2\bar{D}_w)$ in an open set of area bounded above by $\pi C^2 \varepsilon^2$. The coarea formula then yields,
\begin{equation*}
N\pi C^2 \varepsilon^2 \cdot \pi C^2 (2 \rho)^2 > \frac{1}{2}L \pi^2 \varepsilon^4.
\end{equation*}
Hence there is a constant $C'>0$ such that there can be no more than $C' \varepsilon^{-2}$ balls of radius $3 \varepsilon$ covering $Z \cap Q(\bar{D} \times \bar{D})$. This finishes the proof.
\end{proof}

\section{Positive cohomology assignment}\label{secPCA}
In this section, we will finish the proof of Theorem \ref{4ZJhol}. 

Let us recall the notion of positive cohomology assignment, introduced in \cite{T}. We assume $(X, J)$ is an almost complex manifold, and $C\subset X$ is a set. Let $D\subset \mathbb C$ be the standard unit disk. A map $\sigma: D\rightarrow X$ is called {\it admissible} if $C$ intersects the closure of $\sigma(D)$ inside $\sigma(D)$. Next we define the notion of a positive cohomology assignment to $C$, which is extracted from section 6.1(a) of \cite{T}. 

\begin{definition}\label{PCA}
A positive cohomology assignment to the set $C$ is an assignment of an integer, $I(\sigma)$, to each admissible map $\sigma: D\rightarrow X$. Furthermore, the following criteria have to be met: 
\begin{enumerate}
\item If $\sigma: D\rightarrow X\setminus C$, then $I(\sigma)=0$. 

\item If $\sigma_0, \sigma_1: D\rightarrow X$ are admissible and homotopic via an admissible homotopy (a homotopy $h:[0, 1]\times D\rightarrow X$ where $C$ intersects the closure of Image$(h)$ inside Image$(h)$), then $I(\sigma_0)=I(\sigma_1)$.

\item Let $\sigma: D\rightarrow X$ be admissible and let $\theta: D\rightarrow D$ be a proper, degree $k$ map. Then $I(\sigma\circ \theta)=k\cdot I(\sigma)$.

\item Suppose that $\sigma: D\rightarrow X$ is admissible and that $\sigma^{-1}(C)$ is contained in a disjoint union $\cup_iD_i\subset D$ where each $D_i=\theta_i(D)$ with $\theta_i: D\rightarrow D$ being an orientation preserving embedding. Then $I(\sigma)=\sum_iI(\sigma\circ \theta_i)$.

\item If $\sigma: D\rightarrow X$ is admissible and a $J$-holomorphic embedding with $\sigma^{-1}(C)\ne \emptyset$, then $I(\sigma)>0$.
\end{enumerate}
\end{definition}

The following is Proposition 6.1 of \cite{T}, which will be used to prove Theorem \ref{4ZJhol}.

\begin{prop}\label{pcaholo}
Let $(X, J)$ be a $4$-dimensional almost complex manifold and let $C\subset X$ be a closed set with finite $2$-dimensional Hausdorff measure and a positive cohomology assignment. Then $C$ supports a compact $J$-holomorphic $1$-subvariety. 
\end{prop}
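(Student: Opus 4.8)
\textbf{Proof proposal for Proposition \ref{pcaholo}.}

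The statement to be proved is exactly Proposition 6.1 of \cite{T}, so the honest thing is that the "proof" here is a citation; but let me sketch how one would in fact prove such a statement from scratch, following Taubes' scheme, since that is what the paper is implicitly relying on. The plan is to construct the subvariety locally, disk by disk, using the positive cohomology assignment to count intersection multiplicities, and then patch the local pieces together into a global $J$-holomorphic $1$-subvariety. First I would reduce to a local problem: cover $C$ by finitely many coordinate charts in which $M$ looks like a ball in $\C^2$ and is foliated by $J$-holomorphic disks $Q(D_w)$ transverse to a second such foliation (as in Section \ref{HD2}). Fix one such chart and one foliation; for each $w$ in the transverse disk $D$, the admissible map $\sigma_w$ parametrising $Q(D_w)$ has a well-defined integer $I(\sigma_w)\ge 0$, and by property (5) it is positive exactly when $Q(D_w)\cap C\neq\emptyset$, and by finiteness of the $2$-dimensional Hausdorff measure of $C$ together with the positivity of intersections one shows (the key measure-theoretic input) that $\sum_w I(\sigma_w)$ is uniformly bounded and that the "multiplicity function" $w\mapsto I(\sigma_w)$ is upper semi-continuous.

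The second step is to upgrade this to a structure theorem in the chart: one wants to show that the set $\{(w, p): p\in Q(D_w)\cap C\}$, counted with multiplicity $I(\sigma_w)$ localised at $p$ (using property (4) to split $I(\sigma_w)$ into contributions from disjoint subdisks, each containing one point of $C$), is the graph of a (multivalued) holomorphic-like function over $D$. Concretely, after choosing the foliation generically so that the projection $C\to D$ is proper with finite fibres of total multiplicity $N$, one forms the "branched" set and shows, using the homotopy invariance (property (2)) and the degree property (3) to control how the local multiplicities vary, that the resulting current is locally that of a positive divisor; this is where one invokes the Cauchy--Riemann structure along each disk. The consistency of the local multiplicities as $w$ varies — i.e. that one genuinely gets a $2$-dimensional set carrying integer weights, closed under the almost complex condition — follows from combining properties (2)--(4): homotoping $\sigma_w$ to $\sigma_{w'}$ through admissible maps shows the total count is locally constant, and the splitting property pins down where that count is concentrated.

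The third step is globalisation: having produced on each chart a $J$-holomorphic $1$-subvariety $\{(C_i, m_i)\}$ supported in $C$, one checks the pieces agree on overlaps (again by the homotopy and splitting axioms, which make the local multiplicities independent of the chosen foliation direction, exactly the flexibility exploited in the proof of Proposition \ref{hausdorff}), and assembles them into a global finite set of pairs $(C_i, m_i)$ with each $C_i$ an irreducible $J$-holomorphic $1$-subvariety. One last point is to verify the support of the assembled subvariety is all of $C$ and not a proper subset: this uses property (1), which forces $I(\sigma)=0$ away from the constructed curves, together with property (5), which forbids $C$ from containing a stray $J$-holomorphic disk with zero assignment — so any point of $C$ lies on one of the $C_i$.

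The main obstacle, I expect, is the second step: turning the integer-valued, axiomatically-defined function $\sigma\mapsto I(\sigma)$ into honest local multiplicities of a divisor, i.e. showing that the intersection of $C$ with a generic foliating disk behaves like the zero set of a holomorphic function (finitely many points, with the assignment recording their orders), and that these behave coherently in families. The subtlety is that $C$ is only assumed to be a closed set with finite $\mathcal H^2$-measure and a positive cohomology assignment — no a priori regularity — so one must bootstrap all smoothness from the axioms plus the foliation geometry; Taubes handles this with a delicate rescaling/compactness argument, and reproducing it is the real content. Since this is precisely Proposition 6.1 of \cite{T}, we simply cite it.
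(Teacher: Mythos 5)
Your proposal is correct and matches the paper exactly: the paper does not prove this statement but simply quotes it as Proposition 6.1 of \cite{T}, which is precisely what you do, and your sketch of Taubes' internal argument (foliation by $J$-holomorphic disks, localised multiplicities from the axioms, and regularity of the resulting positive current) is a reasonable summary of what that citation contains. Nothing further is needed here.
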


Now we would assign an appropriate positive cohomology assignment to the set $Z=\alpha^{-1}(0)$ for admissible maps. To do this it is convenient to understand $J$-anti-invariant $2$-forms as smooth sections of the complex line bundle $\Lambda_J^-$ over $M$. We shall denote such a section associated with $\alpha$ by $\Gamma_{\alpha}: M \rightarrow \Lambda_J^-$.

Let $\sigma: D\rightarrow M$ be an admissible map and $\alpha$ a $J$-anti-invariant $2$-form. We assign an integer $I_{\alpha}(\sigma)$ as follows. Since $\sigma$ is admissible with respect to the zero set $Z=\alpha^{-1}(0)$, the closure of the image of the composition $\Gamma_{\alpha}\circ \sigma(D)$ intersects the compact manifold $M$, viewed as a submanifold of the total space of the bundle $\Lambda_J^-$, inside $\Gamma_{\alpha}\circ \sigma(D)$. In other words, $\Gamma_{\alpha}\circ \sigma: D\rightarrow \Lambda_J^-$ is admissible with respect to $M\subset \Lambda_J^-$. There exists an arbitrarily small perturbation of $\Gamma_{\alpha}\circ \sigma$ which produces a map $\sigma'$, homotopic to $\Gamma_{\alpha}\circ \sigma$ through admissible maps, such that $\sigma'$ is transverse to $M$.  The set $T$ of intersection points of $\sigma'(D)$ with $M$ is a finite set of signed points. We define $I_{\alpha}(\sigma)$ to be the sum of these signs. 

We now check that $I_{\alpha}$ is a positive cohomology assignment when $\alpha$ is a closed $J$-anti-invariant  $2$-form. In particular, the independence of the perturbations we have chosen follows from the assertion (2) of Definition \ref{PCA}.

\begin{prop}\label{icpca}
Suppose $\alpha$ is a non-trivial closed $J$-anti-invariant $2$-form.
The assignment $I_{\alpha}(\sigma)$ to an admissible map $\sigma: D\rightarrow M$ defines a positive cohomology assignment to $Z=\alpha^{-1}(0)$. 
\end{prop}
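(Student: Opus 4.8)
The plan is to verify the five axioms of Definition \ref{PCA} for the assignment $I_\alpha$, exploiting the fact that $I_\alpha(\sigma)$ is by construction the oriented intersection number of a generic perturbation $\sigma'$ of $\Gamma_\alpha \circ \sigma$ with the zero section $M \subset \Lambda_J^-$. The first observation is that this intersection number is a homotopy invariant through admissible homotopies: if $h$ is an admissible homotopy from $\sigma_0$ to $\sigma_1$, then $\Gamma_\alpha \circ h$ is admissible with respect to $M$, and after perturbing rel endpoints we get a cobordism between the (signed, finite) intersection sets of $\sigma_0'$ and $\sigma_1'$; this gives axiom (2) and also the well-definedness of $I_\alpha$ independent of the perturbation. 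Axiom (1) is immediate: if $\sigma(D) \subset M \setminus Z$, then $\Gamma_\alpha \circ \sigma$ maps into $\Lambda_J^- \setminus M$ (the complement of the zero section), so a small perturbation still misses $M$ and $I_\alpha(\sigma) = 0$.

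For axiom (3), given a proper degree $k$ map $\theta : D \to D$, one perturbs $\Gamma_\alpha \circ \sigma$ to $\sigma'$ transverse to $M$ away from $\sigma^{-1}(Z)$; then $\sigma' \circ \theta$ is a valid perturbation of $\Gamma_\alpha \circ (\sigma \circ \theta)$, and each transverse intersection point of $\sigma'$ with $M$ has exactly $k$ preimages under $\theta$ counted with sign (using that $\theta$ is orientation-preserving degree $k$ as a map of oriented surfaces, or more carefully counting local degrees), giving $I_\alpha(\sigma \circ \theta) = k \cdot I_\alpha(\sigma)$. Axiom (4) is the additivity/locality statement: if $\sigma^{-1}(Z) \subset \bigsqcup_i \theta_i(D)$, choose the perturbation $\sigma'$ to agree with $\Gamma_\alpha \circ \sigma$ outside a slightly larger disjoint union of disks; then the intersection points of $\sigma'$ with $M$ all lie in $\bigsqcup_i \theta_i(D)$, and summing over $i$ the restricted intersection numbers recovers $I_\alpha(\sigma)$, while $I_\alpha(\sigma \circ \theta_i)$ is computed by the restriction of $\sigma'$ to $\theta_i(D)$ since $\theta_i$ is an orientation-preserving embedding.

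The substantive axiom, and the main obstacle, is (5): if $\sigma : D \to M$ is a $J$-holomorphic embedding with $\sigma^{-1}(Z) \neq \emptyset$, then $I_\alpha(\sigma) > 0$. Here the homotopy-invariance that trivializes (1)--(4) is useless, and one must actually identify the local intersection multiplicities with positive quantities. The strategy is to invoke Lemma \ref{intJdisk2}: since $u := \sigma$ is an embedded $J$-holomorphic disk, there is a neighbourhood $U$ of $u(0)$ (and, after covering $\sigma(D)$ by finitely many such neighbourhoods, of each point of $\sigma(\overline{D})$) and a nowhere-vanishing $\phi \in \Omega_J^-(U)$ such that, writing $\alpha = f\phi + gJ\phi$, the function $F := (f\circ u) + i(g\circ u)$ is holomorphic on $u^{-1}(u(D)\cap U)$. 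Since $\alpha$ is non-trivial, the unique continuation result Lemma \ref{noopen} (together with Lemma \ref{intJdisk2}) forces $F \not\equiv 0$, so $F$ has only isolated zeros, each of some positive multiplicity $m_p$ equal to the local winding number of $F$. The key point is then that in the trivialization $\{\phi, J\phi\}$, the graph $\Gamma_\alpha \circ \sigma$ near a zero $p$ is, up to homotopy through admissible maps supported near $p$, the graph of $F : D_p \to \C$, whose transverse perturbation meets the zero section in exactly $m_p$ points all counted with the $\emph{same}$ orientation sign $+1$ — because the complex-linear structure on the fibres of $\Lambda_J^-$ (via $J$) matches the complex orientation, and a holomorphic map $\C \to \C$ with an isolated zero of multiplicity $m$ has oriented intersection number $+m$ with $0$. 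Summing over the finitely many zeros $p \in \sigma^{-1}(Z)$ and using axiom (4) gives $I_\alpha(\sigma) = \sum_p m_p > 0$. The care required is in checking that changing from the $J$-fibre coordinates to the trivialization of Lemma \ref{intJdisk2} and then perturbing does not alter signs, i.e. that all the identifications are orientation-compatible; this is where one must be attentive, but it is essentially the same bookkeeping as in the positivity of intersections for $J$-holomorphic curves à la Gromov and McDuff, adapted to the line bundle $\Lambda_J^-$.
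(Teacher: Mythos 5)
Your proposal follows essentially the same route as the paper: axioms (1)--(4) of Definition \ref{PCA} are verified by the standard perturbation, homotopy (cobordism/boundary-sphere), local-degree and locality arguments for the intersection number of $\Gamma_{\alpha}\circ\sigma$ with the zero section $M\subset\Lambda_J^-$, and axiom (5) is reduced, via Lemma \ref{intJdisk2}, to positivity of the zeros of the holomorphic function $F=(f\circ u)+i(g\circ u)$ in the trivialisation $\{\phi,J\phi\}$; the paper does exactly this, computing the sign $+1$ at a simple zero from the orientation splitting of $T_{\Gamma_{\alpha}\circ\sigma(p)}\Lambda_J^-$ into the fibre, the tangent plane of the $J$-holomorphic disk, and its complex normal plane.

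One step is, however, justified by a false implication, even though its conclusion is correct. You claim that non-triviality of $\alpha$ together with Lemma \ref{noopen} forces $F\not\equiv 0$ on the disk. Lemma \ref{noopen} only rules out $\alpha$ vanishing on an \emph{open} subset of $M$; the set $Z$ can perfectly well contain embedded $J$-holomorphic disks (indeed Theorem \ref{4ZJhol} says $Z$ is swept out by $J$-holomorphic curves, and in the integrable case one can take a disk inside a canonical divisor), in which case $F\equiv 0$ along that disk while $\alpha\not\equiv 0$ on $M$. What actually saves axiom (5) is admissibility: if $\sigma(D)\subset Z$ then, $Z$ being closed, $Z\cap\overline{\sigma(D)}=\overline{\sigma(D)}$, and since $\sigma$ is an embedding of the open disk its image is not compact, hence not closed in the compact manifold $M$, so $\overline{\sigma(D)}\not\subset\sigma(D)$ and $\sigma$ fails to be admissible. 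Thus for an admissible $J$-holomorphic embedding the local holomorphic representatives of $\Gamma_{\alpha}\circ\sigma$ are not identically zero, their zeros are isolated, and (again by admissibility, which prevents zeros from accumulating at the boundary of the disk) $\sigma^{-1}(Z)$ is finite --- the finiteness you implicitly use when summing the local multiplicities via axiom (4). With that correction your argument coincides with the paper's proof.
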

\begin{proof}
We will check the assertions (1)-(5) of Definition \ref{PCA} in the following.

If $\sigma(D)\cap \alpha^{-1}(0)=\emptyset$, then $\Gamma_{\alpha}\circ \sigma(D)\cap M=\emptyset$, which implies $I_{\alpha}(\sigma)=0$. This is assertion (1).

Showing assertion (2) is equivalent to showing the following. Let $\sigma_t': D\rightarrow \Lambda_J^-$, $t\in [0, 1]$, be admissible maps with respect to $M$. Let $\sigma_0'$ and $\sigma_1'$ intersect $M$ transversely. Then the intersection numbers ({\it i.e.} the corresponding sums of the signed intersection points $T$) $\sigma_0'\cdot M=\sigma_1'\cdot M$.

To show this, we look at the admissible homotopy $\sigma': D\times I\rightarrow \Lambda_J^-$, where $\sigma'(x, t)=\sigma_t'(x)$. Its boundary map $\partial \sigma': S^2\rightarrow \Lambda_J^-$ is homotopic to zero. Hence $\partial\sigma'\cdot M=0$. Since $\sigma_t'$ are admissible, $M$ intersects  $\partial \sigma'$ only at $\sigma_0'(D)$ and $\sigma_1'(D)$. Moreover, $\partial \sigma'$ induces the reverse orientation at $\sigma_1'(D)$. Hence, $\sigma_0'\cdot M-\sigma_1'\cdot M=\partial\sigma'\cdot M=0$. This implies Definition \ref{PCA}(2), {\it i.e.} $I_{\alpha}(\sigma_0)=I_{\alpha}(\sigma_1)$ if $\sigma_0$ and $\sigma_1$ are connected via an admissible homotopy. 

To show assertion (3), we first choose an admissible map $\sigma': D\rightarrow \Lambda_J^-$ (with respect to $M$) transverse to $M$ which is perturbed from $\Gamma_{\alpha}\circ\sigma$. We can also find a small perturbation $\theta'$ of the degree $k$ map $\theta: D\rightarrow D$ such that there is no critical value of $\theta'$ mapping to $M$ by $\sigma'$. Hence the sum of the signs of the intersection points of $\sigma'\circ \theta': D\rightarrow \Lambda_J^-$ is $k$ times that of $\sigma': D\rightarrow \Lambda_J^-$. Since the number $I_{\alpha}$ is independent of the choice of perturbations by assertion (2), we thus have $I_{\alpha}(\sigma\circ \theta)=k\cdot I_{\alpha}(\sigma)$.

For assertion (4), we choose a perturbation $\sigma': D\rightarrow \Lambda_J^-$ of $\Gamma_{\alpha}\circ \sigma$ such that $\sigma'|_{D-\cup_iD_i}=\Gamma_{\alpha}\circ \sigma |_{D-\cup_iD_i}$. Hence $I_{\alpha}(\sigma)=\sum_i I_{\alpha}(\sigma\circ \theta_i)$.

For the last assertion, let $\sigma: D\rightarrow M$ be an admissible embedded $J$-holomorphic disk. For each intersection point $p\in \sigma^{-1}(\sigma(D)\cap Z)$, we can choose a small neighbourhood $D_p\subset D$ such that, for a certain trivialisation of the complex line bundle $\Lambda_J^-$ over an open neighbourhood $U_p\subset M$ containing $\sigma(D_p)$, the composition $\Gamma_{\alpha}\circ \sigma$ is a holomorphic function over $D_p$ by Lemma \ref{intJdisk2}. Hence, if we perturb this holomorphic function to a nearby one, we will get a holomorphic function with simple zeros. Thus, without loss, we can assume $p$ is such a simple zero. At $\Gamma_{\alpha} \circ\sigma(p)$, the tangent space has the following splitting regarding the orientation$$T_{\Gamma_{\alpha} \circ \sigma(p)}\Lambda_J^-=\Lambda_J^-|_{\sigma(p)}\oplus T_{\sigma(p)}(U_p)=\Lambda_J^-|_{\sigma(p)}\oplus\sigma_*(T_pD_p)\oplus T_{\sigma(p)}(U_p)/\sigma_*(T_pD_p).$$ 
Here, the fibre of the bundle $\Lambda_J^-$ is oriented by local basis $(\phi, -J\phi)$ as in Section \ref{HD2}. Since $D_p$ is a $J$-holomorphic disk in $U_p$, the vector space $T_{\sigma(p)}(U_p)/\sigma_*(T_pD_p)$ is a natural complex plane. Hence, the sign associated with the intersection point $\sigma(p)$ is $+1$. This confirms assertion (5).
\end{proof}

The assignment $I_{\alpha}$ satisfies the assertions Definition \ref{PCA} (1)-(4) for any $J$-anti-invariant $2$-form $\alpha$. The assumption that $\alpha$ is a $J$-anti-invariant closed $2$-form is only used to show assertion (5).

Before we complete the proof of Theorem \ref{4ZJhol}, we recall that given a $J$-holomorphic subvariety $\Theta=\{(C_i, m_i)\}$, there is a natural positive cohomology assignment for its support $|\Theta|=\cup C_i$.  Let $C_i=\phi_i(\Sigma_i)$ where each $\Sigma_i$ is a compact connected complex curve and $\phi_i: \Sigma_i\rightarrow M$ is a $J$-holomorphic  embedding off a finite set. When $\sigma: D\rightarrow M$ is admissible, there is an arbitrarily small perturbation, $\sigma'$,  of $\sigma$ which is homotopic to $\sigma$ through admissible maps and is transverse to $\phi_i$. Each fibre product $T_i:=\{(x, y)\in D\times \Sigma_i| \sigma'(x)=\phi_i(y)\}$ is a finite set of signed points of $D\times \Sigma$. We associate a positive weight $m_i$ to each signed point in $T_i$. The weighted sum of these signs in $\cup T_i$ is a positive cohomology assignment, denoted by $IS_{\Theta}$. 

Conversely, once a positive cohomology assignment $I$ is given as in Proposition \ref{pcaholo} and given $C=\cup C_i$, we can associate the positive weight $m_i$ to $C_i$ as $I(\sigma)$ where $\sigma$ is a $J$-holomorphic disk intersecting transversally to $C_i$ at a smooth point. The cohomology assignment $IS_{\Theta}$ for the subvariety $\Theta=\{(C_i, m_i)\}$ obtained in this way is equal to the original $I$. 

We will now prove Theorem \ref{4ZJhol}.

\begin{proof}[Proof of Theorem \ref{4ZJhol}]
By Proposition \ref{hausdorff} the zero set $Z=\alpha^{-1}(0)$ is a closed set with finite $2$-dimensional Hausdorff measure. By Proposition \ref{icpca}, $Z$ can be endowed with a positive cohomology assignment, $I_{\alpha}(\sigma)$, for each admissible map $\sigma: D\rightarrow M$. Hence, by Proposition \ref{pcaholo}, the zero set  $Z=\alpha^{-1}(0)$ supports a $J$-holomorphic $1$-subvariety. Let $\Theta_{\alpha}$ be the $J$-holomorphic $1$-subvariety determined in the manner described above by the cohomology assignment $I_{\alpha}$. 

The assignment $I_{\alpha}(\sigma)$ for an admissible map $\sigma: D\rightarrow M$ could be understood in the following equivalent way. We look at the disk $\sigma(D)\subset M\subset \Lambda_J^-$ and the section $\Gamma_{\alpha}(M)$ inside the total space of the bundle $\Lambda_J^-$. Then we perturb the section $\Gamma_{\alpha}$ to another one $\Gamma_{\alpha'}$ where $\alpha'$ is a $J$-anti-invariant $2$-form, such that $\Gamma_{\alpha'}$ is transverse to $\sigma(D)$. Moreover, we require $\Gamma_{\alpha'}$ is homotopic to $\Gamma_{\alpha}$ through sections $\alpha_t$ such that $\alpha_t^{-1}(0)\cap \partial \sigma=\emptyset$, $\forall t\in [0, 1]$. The set $T'$ of intersection points of $\sigma(D)$ and $\Gamma_{\alpha'}(M)$ is a finite set of signed points.  Suppose $\sigma$ is of degree $k$ onto its image. Then our $I_{\alpha}(\sigma)$ is $k$ times the sum of these signs in $T'$. 

When we choose $\alpha'$ such that $\Gamma_{\alpha'}(M)\pitchfork M$ inside the total space of $\Lambda_J^-$, we know  $\Gamma_{\alpha'}(M)\cap M$ is a smooth submanifold of $M$ representing the Euler class of the bundle $\Lambda_J^-$. By Proposition 4.3 of \cite{ZintJ}, it is the canonical class $K_J$ of the almost complex manifold $(M, J)$. The sign of each point in $T'$ is equal to the one calculated from the intersection of $\sigma(D)$ with $\Gamma_{\alpha'}(M)\cap M$ inside $M$ if we orient the fibre of the bundle $\Lambda_J^-$ by local basis $\{\phi, J\phi\}$ as in Section \ref{HD2}. 

Since any homology class $\xi \in H_2(M, \mathbb Z)$ is representable by an embedded submanifold, the above claim just implies $\xi\cdot [\Theta_{\alpha}]=\iota_*(\xi) \cdot [M]$ as integers. Here $\iota_*(\xi)$ denotes the induced class in the second Borel-Moore homology of the total space of $\Lambda_J^-$ and the latter product is understood as the intersection pairing in Borel-Moore homology. The homology class $[\Theta_{\alpha}]$ is determined by the intersection pairing with all the classes in $H_2(M, \mathbb Z)$. As explained in the previous two paragraphs,  $\xi\cdot [\Theta_{\alpha}]=\xi\cdot [\Theta_{\alpha'}]=\xi \cdot K_J$, $\forall \xi\in H_2(M, \mathbb Z)$. Hence $\Theta_{\alpha}$ is a $J$-holomorphic $1$-subvariety in the canonical class $K_J$.
\end{proof}

The $J$-holomorphic $1$-subvariety $\Theta_{\alpha}$ determined by the positive cohomology assignment $I_{\alpha}$ corresponding to the closed $J$-anti-invariant form $\alpha$ is called the {\it zero divisor} of $\alpha$.

\begin{remark}
Finally, we remark that the zero locus $Z=\alpha^{-1}(0)$ is exactly where $\alpha$ is degenerate. In particular, it implies $\alpha$ is almost K\"ahler on $M\setminus Z$ if $\alpha$ is a closed $J$-anti-invariant $2$-form. It is direct to see from the local expression. For any point $p\in (M, J)$, the tangent space is identified with a $4$-dimensional real vector space along with a complex structure $J_p$. Let $x_1, x_2, y_1, y_2$ be coordinates centred at $p$ such that $J_p dx_1=-dy_1$ and $J_pdx_2=-dy_2$. Now $(\Lambda_J^-)_p$ is spanned by two non-degenerate $2$-forms $$\beta=dx_1\wedge dx_2-dy_1\wedge dy_2, \, \, \, J_p\beta=dx_1\wedge dy_2+dy_1\wedge dx_2.$$ If $\alpha_p=a\beta+bJ_p\beta$ is degenerate, then there exists an $X\in T_pM$ such that $\beta(aX+bJ_pX, \cdot)=0$. Since $\beta$ is non-degenerate, we must have $a=b=0$. 

Since the first Chern class $c_1(M\setminus Z, J)=0$, we know $M\setminus Z$ is an open symplectic Calabi-Yau $4$-manifold when $\alpha$ is a closed $J$-anti-invariant $2$-form. If the almost complex structure $J$ is compatible with (or tamed by) a symplectic form on $M$, we would like to know whether $M\setminus Z$ is a complex symplectic manifold.
\end{remark}

\section{Higher dimensions}

Our argument can be applied to sections of the canonical bundle in higher dimensions. Let $(M,J)$ be a closed connected almost complex $2n$-manifold. As in the four dimensional case there is a natural generalisation of the canonical bundle, namely the bundle of real parts of $(n,0)$ forms. We will denote this bundle by $\Lambda^{n,0}_{\R}$. The space of its sections is denoted by $\Omega^{n, 0}_{\R}$.

The almost complex structure $J$ on $M$ induces a complex line bundle structure on $\Lambda^{n,0}_{\R}$, we still denote the almost complex structure on $\Lambda^{n,0}_{\R}$ by $J$. Indeed, $J$ on $\Lambda^{n,0}_{\R}$ can be described concretely by its action on a section $\beta$ as follows,
\begin{equation*}
J \beta (X_1,X_2, \cdots,X_n) := -\beta(JX_1,X_2, \cdots,X_n).
\end{equation*}

Using the argument given over the previous two sections we are able to prove the following.

\begin{theorem} \label{highdim}
Let $(M,J)$ be a closed, connected almost complex $2n$-manifold and $\alpha$ a non-trivial, closed form in $\Omega^{n,0}_{\R}$. Then the zero set $Z:=\alpha^{-1}(0)$ is a set of finite $(2n-2)$-dimensional Hausdorff measure admitting a positive cohomology assignment.
\end{theorem}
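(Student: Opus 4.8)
The plan is to run the two-part strategy of Sections~\ref{HD2} and~\ref{secPCA} with the complex line bundle $\Lambda^{n,0}_{\R}$ playing the role of $\Lambda^-_J$. The first step is the higher-dimensional analogue of Lemma~\ref{intJdisk2}, which I would state as Lemma~\ref{highJdisk2}: for a smooth embedded $J$-holomorphic disk $u : D \rightarrow M$ and a closed $\alpha \in \Omega^{n,0}_{\R}$, there is a neighbourhood $U$ of $u(0)$ and a nowhere vanishing frame $\{\phi, J\phi\}$ of $\Lambda^{n,0}_{\R}$ over $U$ such that, writing $\alpha = f\phi + gJ\phi$, the function $(f\circ u) + i(g\circ u)$ is holomorphic on $u^{-1}(u(D)\cap U)$. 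As in Section~\ref{HD2} one first takes Gaussian coordinates and a $J$-fibre-diffeomorphism, then normalises so that $J = J_0$ along $u(D)$ with $u(D) = \{x^3 = \cdots = x^{2n} = 0\}$; this is the construction on page~903 of~\cite{T}, now with $n-1$ normal complex directions. With complex coordinates $w^0,\dots,w^{n-1}$ one takes the local frame $\phi_0 := \mathrm{Re}\,(dw^0 \wedge \cdots \wedge dw^{n-1})$, so that $J\phi_0 = -\mathrm{Im}\,(dw^0 \wedge \cdots \wedge dw^{n-1})$, and writes $\alpha = f_0\phi_0 + g_0 J\phi_0$. Since $n \geq 2$ we have $u^*\phi_0 = u^*J\phi_0 = 0$, so contracting $d\alpha = 0$ with the normal vector fields $\partial_3,\dots,\partial_{2n}$ and pulling back along $u$ should yield, just as in Lemma~\ref{intJdisk2}, a first-order linear system for $(\tilde f_0,\tilde g_0)$ on $D$ that assembles into a Cauchy--Riemann equation $\bar\partial v + C_1 v + C_2\bar v = 0$ with $v = \tilde f_0 + i\tilde g_0$ and $C_1, C_2 \in L^\infty$. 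Theorem~\ref{carleman} then produces a nowhere-zero $\Phi$ and a holomorphic $F$ with $v = \Phi F$, and $\Phi$ determines the frame $\{\phi, J\phi\}$; in particular, if $u(D)\not\subset Z$ then $Z\cap u(D)$ is a finite set of points, each counted with positive multiplicity.

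The next step is the unique continuation statement Lemma~\ref{highnoopen}: a closed $\alpha\in\Omega^{n,0}_{\R}$ vanishing on an open set vanishes identically. I would deduce this from Lemma~\ref{highJdisk2} as flagged in the text --- along any $J$-holomorphic disk the holomorphic function $F$ vanishing on an open subset forces the whole disk into $Z$, and foliating a neighbourhood of a boundary point of $\mathrm{int}(Z)$ by such disks in two transverse directions fills a full neighbourhood, a contradiction --- or one may simply invoke the elliptic unique-continuation results of~\cite{HMT}. With Lemmas~\ref{highJdisk2} and~\ref{highnoopen} in hand, the finiteness of the $(2n-2)$-dimensional Hausdorff measure of $Z$ is proved exactly as Proposition~\ref{hausdorff}: cover $Z$ by finitely many $J$-fibre-diffeomorphism charts; in each chart only finitely many complex directions carry a $J$-holomorphic disk contained in $Z$, so after rotating the Gaussian coordinates no disk $Q(D_w)$ lies in $Z$; the counting function $w \mapsto \#(Z\cap Q(\bar D_w))$ on the $(2n-2)$-dimensional parameter polydisk is upper semicontinuous and, by positivity from Lemma~\ref{highJdisk2}, bounded above by its maximum $N$; and a Vitali covering together with the coarea formula --- integrated now over the $(2n-2)$-dimensional parameter space --- bounds by a constant multiple of $\varepsilon^{-(2n-2)}$ the number of radius-$3\varepsilon$ balls needed to cover $Z$ in each chart.

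For the positive cohomology assignment I would regard $\alpha$ as a smooth section $\Gamma_\alpha : M \rightarrow \Lambda^{n,0}_{\R}$, so $Z = \Gamma_\alpha^{-1}(M)$ with $M$ the zero section. For an admissible $\sigma : D \rightarrow M$ the composition $\Gamma_\alpha\circ\sigma$ is admissible with respect to $M\subset\Lambda^{n,0}_{\R}$, and since $D$ and $M$ have complementary dimension in the total space $\Lambda^{n,0}_{\R}$ a small perturbation $\sigma'$ meets $M$ transversely in finitely many signed points; set $I_\alpha(\sigma)$ equal to the sum of these signs. Properties~(1)--(4) of Definition~\ref{PCA} are checked verbatim as in Proposition~\ref{icpca} and use only the continuity of $\alpha$; for property~(5), given an admissible embedded $J$-holomorphic disk $\sigma$, Lemma~\ref{highJdisk2} trivialises $\Lambda^{n,0}_{\R}$ near each zero so that $\Gamma_\alpha\circ\sigma$ is holomorphic there, perturbation reduces to simple zeros, and at each such zero the orientation of the fibre of $\Lambda^{n,0}_{\R}$ (via a frame $(\phi,-J\phi)$) together with the canonical complex orientation of $T_{\sigma(p)}M/\sigma_*(T_pD)$ makes the local intersection sign $+1$. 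This gives the positive cohomology assignment, and with the Hausdorff bound it is the full assertion; note that, unlike the four-dimensional case, one cannot then conclude that $Z$ supports a $J$-holomorphic subvariety, as that depends on the higher-dimensional analogue of Proposition~\ref{pcaholo}, which is Question~3.9 of~\cite{ZintJ}.

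I expect the main obstacle to be Lemma~\ref{highJdisk2}, and within it the step that extracts a \emph{closed}, first-order \emph{elliptic} system for $(f_0,g_0)$ out of $d\alpha = 0$. In dimension four this is a two-line contraction, whereas in dimension $2n$ one must track the many components of $d\big(\mathrm{Re}(dw^0\wedge\cdots\wedge dw^{n-1})\big)$ and of the $2(n-1)$ normal contractions, and verify that precisely the right combination survives the pullback $u^*$ so as to give a Cauchy--Riemann equation with bounded coefficients rather than an overdetermined or a merely first-order non-elliptic system. Setting up the coordinate normal form $J = J_0$ along $u(D)$ also needs more care than in dimension four, since more off-diagonal blocks of $J$ must be eliminated, though it remains a routine adaptation of Taubes' construction; everything downstream is then a transcription of Sections~\ref{HD2} and~\ref{secPCA}.
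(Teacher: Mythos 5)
Your proposal follows essentially the same route as the paper: the higher-dimensional analogue of Lemma \ref{intJdisk2} obtained by normalising $J=J_0$ along $u(D)$, writing $\alpha = f_0\phi_0 + g_0 J\phi_0$ with $\phi_0 = \Re[dz^1\wedge\cdots\wedge dz^n]$, contracting $d\alpha=0$ by the normal complex directions (the paper uses the multi-contraction $\partial_{z^2}\inmult\cdots\inmult\partial_{z^n}$ and its $i$-twisted companion to get exactly two equations) and applying the Carleman similarity principle, then unique continuation by foliations, the Vitali/coarea Hausdorff estimate over the $(2n-2)$-dimensional parameter polydisk, and the verbatim transcription of Proposition \ref{icpca} for the positive cohomology assignment. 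Your closing caveat that the subvariety conclusion is conditional on Question 3.9 of \cite{ZintJ} is also exactly the paper's position, so the proposal is correct and matches the paper's proof.
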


This naturally asks for a generalisation of Proposition \ref{pcaholo} which we phrase as the following question (Question 3.9 in \cite{ZintJ}).

\begin{question} \label{conj}
Let $(M,J)$ be a closed, connected almost complex $2n$-manifold and $C \subset M$ a closed set with finite $(2n-2)$-dimensional Hausdorff measure and admitting a positive cohomology assignment. Does $C$ support a compact $J$-holomorphic subvariety of complex dimension $n-1$?
\end{question}

If the answer to this question is affirmative then Theorem \ref{highdim} would imply that the zero set of a closed form $\alpha$ in $\Omega^{n,0}_{\R}$ supports a $J$-holomorphic $(n-1)$-subvariety in the canonical class. Recall a $J$-holomorphic $k$-subvariety is a finite set of pairs $\{(V_i, m_i), 1\le i\le m\}$, where each $V_i$ is an irreducible $J$-holomorphic $k$-subvariety and each $m_i$ is a positive integer. Here an irreducible $J$-holomorphic $k$-subvariety is the image of a somewhere immersed pseudoholomorphic map $\phi: X\rightarrow M$ from a compact connected smooth almost complex $2k$-manifold $X$. 

The key to the proof of Theorem \ref{highdim} is to establish foliations by $J$-holomorphic disks in higher dimensions. The following result is Lemma 3.10 in \cite{ZintJ}.
\begin{lemma} \label{highJdisk}
Let $\tilde{J}$ be an almost complex structure on $\C^n$ which agrees with the standard complex structure $J_0$ at the origin. Further let $g$ be a Hermitian metric compatible with $\tilde{J}$. Then there exists a constant $\rho_0 >0$ with the following property. Let $0< \rho < \rho_0$ and $D \subset \C$ the disk of radius $\rho$. Then there exists a diffeomorphism $Q: D \times D^{n-1} \rightarrow \C^n$, and constants $L,L_m$ depending only on $g$ and $\tilde{J}$, such that
\begin{itemize}
\item For all $w \in D^{n-1}$, $Q(D_w)$ is a $\tilde{J}$-holomorphic submanifold containing $(0,w)$;
\item For all $w \in D^{n-1}$, $|(\xi, w) - Q(\xi,w)| \leq L \cdot \rho \cdot |\xi|$;
\item For all $w \in D^{n-1}$, the derivatives of order $M$ of $Q$ are bounded by $L_m \cdot \rho$;
\item For each $\kappa \in \mathbb CP^{n-1}$ we can choose $Q$ such that the disk $Q(D_0)$ is tangent at the origin to the line determined by $\kappa$.
\end{itemize}
\end{lemma}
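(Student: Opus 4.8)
The plan is to run the standard scheme for producing a family of pseudoholomorphic disks near a point at which $\tilde J$ coincides with $J_0$: after rescaling so that the ambient structure is a small perturbation of $J_0$, solve the $\tilde J$-holomorphic equation on the unit disk by a contraction-mapping argument, with the centre $(0,w)$ entering as a parameter. For $n=2$ this is Lemma 5.4 of \cite{T}, and the same argument handles general $n$.

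First I would rescale. For small $\rho$ set $\tilde J_\rho(z):=\tilde J(\rho z)$ on a fixed ball $\bar B\subset\C^n$; this is an almost complex structure with $\tilde J_\rho(0)=J_0$, and since $\tilde J$ is smooth with $\tilde J(0)=J_0$ one has $\|\tilde J_\rho-J_0\|_{C^0(\bar B)}\le L_0(g,\tilde J)\,\rho$ and $\|\tilde J_\rho-J_0\|_{C^k(\bar B)}\to 0$ as $\rho\to0$ for every $k$. Writing $\tilde J_\rho=J_0+E_\rho$, a $W^{1,p}$ map $u:D\to\C^n$ (with $D$ now the unit disk and $p>2$) is $\tilde J_\rho$-holomorphic precisely when $\bar\partial u=N_\rho(u)\,\partial u$ for a matrix-valued function $N_\rho$ built algebraically from $E_\rho\circ u$, so that $\|N_\rho\|\lesssim\rho$ on $\bar B$.

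Next, for each $w$ in the unit polydisk $D^{n-1}$ write $u_w=\ell_w+v_w$ with $\ell_w(\zeta):=(\zeta,w)$, so $\bar\partial\ell_w=0$, and convert the equation into the fixed-point problem $v=\mathcal T\big(N_\rho(\ell_w+v)\,\partial(\ell_w+v)\big)$, where $\mathcal T$ is a bounded right inverse of $\bar\partial$ on $W^{1,p}(D)$ normalised so that $(\mathcal T f)(0)=0$. For $\rho$ below a threshold $\rho_0=\rho_0(g,\tilde J)$ the right-hand side is a contraction of a ball of radius $\sim\rho$ in $W^{1,p}(D,\C^n)$, uniformly in $w$; its unique fixed point $v_w$ satisfies $v_w(0)=0$, hence $u_w(0)=(0,w)$, and $u_w$ is $\tilde J_\rho$-holomorphic. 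Elliptic regularity (using smoothness of $\tilde J$) makes $v_w$ smooth, with interior bounds $\|v_w\|_{C^m(\bar D_{1/2})}\lesssim\rho$ and smooth dependence on $w$ by the implicit function theorem with parameters; together with $v_w(0)=0$ this gives $|v_w(\zeta)|\lesssim\rho|\zeta|$. Undoing the rescaling, $Q(\xi,w):=\rho\,u_{w/\rho}(\xi/\rho)$ is defined on $D\times D^{n-1}$ for the original radius $\rho$, each $Q(D_w)$ is a $\tilde J$-holomorphic disk through $(0,w)$, and the two quantitative items follow from the bounds on $v_w$. Since $dQ|_0=I+O(\rho)$, after shrinking $\rho_0$ the map $Q$ is a diffeomorphism onto an open neighbourhood of $0$, so the leaves $Q(D_w)$ form a foliation. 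The last item is obtained by first conjugating $\tilde J$ by a unitary transformation of $\C^n$ carrying the $\zeta$-axis onto the line determined by $\kappa$ (which preserves the normalisation at the origin) and post-composing $Q$ with it.

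The main point to watch is the uniformity of all constants in both $\rho$ and $w$: the rescaling must be arranged so that the smallness of $N_\rho$ and the operator norm of $\mathcal T$ are controlled independently of $\rho$, which is exactly what makes the contraction uniform and forces $L,L_m$ to depend only on $g$ and $\tilde J$; and one must check that the fixed point, produced from a contraction scheme with $w$ ranging over the whole polydisk, is smooth up to the required order in $w$ as well as in $\zeta$, so that $Q$ is genuinely a diffeomorphism and its higher derivatives obey the stated $\rho$-bounds.
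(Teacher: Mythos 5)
The paper does not actually prove this lemma: it is quoted as Lemma 3.10 of \cite{ZintJ}, whose $n=2$ case is Lemma 5.4 of \cite{T}, and those sources establish it by precisely the scheme you outline (rescale so that $\tilde J$ is an $O(\rho)$ perturbation of $J_0$, solve the perturbed Cauchy--Riemann equation by a contraction with the centre $(0,w)$ entering as a smooth parameter, undo the rescaling, and rotate the chart to prescribe the direction $\kappa$). So your approach is the right one and coincides with the argument behind the cited result rather than replacing it; the existence of the disks, the property $Q(\xi,w)(0)=(0,w)$, the first quantitative bullet, and the diffeomorphism property for small $\rho$ all come out of your fixed-point construction as you describe.

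The only step I would press you on is the third bullet. Undoing the rescaling gives $Q(\xi,w)-(\xi,w)=\rho\, v_{w/\rho}(\xi/\rho)$, so an order-$m$ derivative in the original variables equals $\rho^{1-m}\,(D^m v_{w/\rho})(\xi/\rho)$; the bound $\|v_w\|_{C^m(\bar{D}_{1/2})}\lesssim\rho$ that you invoke therefore yields only $\rho^{2-m}$, which is weaker than the stated $L_m\cdot\rho$ as soon as $m\ge 2$. To obtain the derivative estimates in the form the lemma asserts, one must use the finer fact that the rescaled perturbation $E_\rho(z)=(\tilde J-J_0)(\rho z)$ has $k$-th derivatives of size $O(\rho^{k})$ for $k\ge 1$ (not merely $O(\rho)$ in $C^0$) and propagate these improved bounds through the elliptic bootstrap for $v_w$, keeping track of which powers of $\rho$ survive the chain rule when returning to the $\rho$-disk. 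This is exactly the uniform-in-$\rho$ bookkeeping you flag in your closing paragraph but do not carry out; it is a quantitative refinement of your argument rather than a different idea, and with it the proof is complete.
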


Now given any point $x \in M$ we can find a local Gaussian coordinate chart and hence Lemma \ref{highJdisk} gives a foliation by $J$-holomorphic disks in a neighbourhood of $x$. 

Fix $x \in M$, we can find a neighbourhood $U$ of $x$ and a non-degenerate 2-form $\Omega$ on $U$ such that $J$ is compatible with $\Omega$ in $U$. This pair $(\Omega,J)$ induce an almost Hermitian metric on $U$. Now we can identify a geodesic ball centred at $x$ with a ball in $\R^{2n}$ centred at the origin. Identify $\R^{2n} = \C^n$ such that
\begin{eqnarray*}
\Omega_x = \omega_0 &=& dx^1 \wedge dx^2 + \cdots+ dx^{2n-1} \wedge dx^{2n} \\
&=& \frac{i}{2} \left( dz^0 \wedge d\bar{z}^0 +\cdots + dz^{n-1} \wedge d\bar{z}^{n-1} \right), 
\end{eqnarray*}
with complex coordinates $(z^0, \cdots, z^{n-1})=(x^1,x^2, \cdots,x^{2n-1},x^{2n})$.
So we may as well assume that $J$ is an almost complex structure on $\C^{n}$ which agrees with the standard complex structure $J_0$ at the origin.

Lemma \ref{highJdisk} gives a $J$-fibre diffeomorphism $Q$ and let $(\xi,\zeta,w)$ be the associated coordinates, where $\xi,\zeta \in D$ and $w=(w^1, \cdots, w^{n-2}) \in D^{n-2}$. Since the disks of constant $(\zeta,w)$ are $J$-holomorphic the almost complex structure $J$ must decompose, with respect to the splitting $T(D \times D \times D^{n-2}) = TD \oplus TD \oplus TD^{n-2} = \R^2 \oplus \R^2 \oplus \R^{2n-4}$, as follows:
\begin{equation*} 
J = \left( \begin{array}{ccc}
a & b_1 & c_1 \\
0 & a' & c_2 \\
0 & b_2 & c_3
\end{array} \right) 
\end{equation*}
Here $a,a',b_1 \in\! \R^{2 \times 2}$, $b_2 \in \R^{(2n-4)\times 2}$, $c_1,c_2 \in \! \R^{2 \times (2n-4)}$ and $c_3 \in \! \R^{(2n-4) \times (2n-4)}$ are matrix valued functions on $D^n$ such that the condition $J^2 = - I$ is satisfied. 

We can further choose coordinates $(\xi_1,\zeta_1, w_1)$ such that $u(D)$ is the disk $\{\xi_1=0, w_1=0\}$, at least locally near $x=u(0)$. To see this first remark that by the final part of Lemma \ref{highJdisk} the $J$-fibre diffeomorphism may be chosen so that $Q(D_0)$ intersects $u(D)$ transversally at $u(0)$. The transversality condition facilitates the application of the implicit function theorem to find, after shrinking $D$ if necessary, smooth functions $\tau_0, \cdots,\tau_{n-2}:D \rightarrow \R^2$ such that $\tau_i(0)=0$ and $u(\zeta)=(\tau_0(\zeta),\zeta,\tau_1(\zeta), \cdots,\tau_{n-2}(\zeta))$. By making the change of coordinates $$(\xi_1,\zeta_1,w_1):=(\xi-\tau_0(\zeta),\zeta,w^1-\tau_1(\zeta), \cdots,w^{n-2}-\tau_{n-2}(\zeta)),$$
we ensure that $u(D)$ is described by $\{\xi_1=0,w_1=0\}$ in a neighbourhood of $x$. Thus in the $(\xi_1,\zeta_1,w_1)$ coordinates we must have $b_1=0$ and $b_2=0$ along the disk $u(D)$. Finally we can make a further change of coordinates to $(\xi_2, \zeta_2, w_2)$ so that
\begin{equation*}
a\equiv \left( \begin{array}{cc}
0 & 1 \\
-1 & 0 
\end{array} \right) \hbox{ and }\,\, 
a'|_{u(D)}=  \left( \begin{array}{cc}
0 & 1 \\
-1 & 0 
\end{array} \right).
\end{equation*}

Applying this process to the complex directions determined by the $n-2$ components of $w_1$, that is, choosing $J$-holomorphic disk foliations along the directions of  $w_1$ at $x=u(0)$ and choose $u(D)$ to be in the center as above, we are able to standardize the coordinate at $u(D)$ such that $J|_{u(D)}$ is a $2n\times 2n$ block matrix with $n$ $2\times 2$ matrices $\left( \begin{array}{cc}
0 & 1 \\
-1 & 0 
\end{array} \right)$. 
Henceforth we let $(z^1,\cdots ,z^n)=(x^1, x^2, \cdots , x^{2n-1}, x^{2n})$ denote the coordinates $(\zeta_2, \xi_2, w_2)$ so that $u(D)$ is defined by $z^2=\cdots =z^{n}= 0$.

We continue assuming $u:D \rightarrow M$ is an embedded $J$-holomorphic disk and $U$ is a neighbourhood of $u(0)$ with the coordinates described above.  On $u(D) \cap U$ define 
\begin{equation*} \label{highphi}
\phi_0|_{u(D) \cap U} := \Re \left[ dz^1 \wedge \cdots \wedge dz^n \right] =\Re \left[ (dx^1 + i dx^2) \wedge \cdots \wedge (dx^{2n-1} + i dx^{2n}) \right].
\end{equation*}

We can extend $\phi_0$ to a form in $\Lambda^{n,0}_{\R}(U)$. Indeed, by shrinking $U$ if necessary, we may assume that $\Lambda^{n,0}_{\R}$ is trivialised over $U$. So we can take a local basis of $\Lambda^{n,0}_{\R}$, say $\psi, J \psi$. On $u(D)$ there are functions $h_1,h_2$ such that
\begin{equation*}
\phi_0|_{u(D) \cap U} = h_1 \psi |_{u(D) \cap U}+ h_2 J \psi |_{u(D) \cap U}.
\end{equation*}
Now to extend $\phi_0$ we choose any non-zero smooth extensions of $h_1$ and $h_2$ to $U$.

A straightforward calculation shows that 
\begin{eqnarray*} \label{highJphi}
J\phi_0|_{u(D) \cap U} &=&\Re \left[ idz^1 \wedge ... \wedge dz^n \right] \\
&=& \Re \left[ (-dx^2 + i dx^1) \wedge ... \wedge (dx^{2n-1} + i dx^{2n}) \right].
\end{eqnarray*}

We can establish positivity of intersections of the zero set with embedded $J$-holomorphic disks. With the coordinates described above one may derive some generalised Cauchy-Riemann equations for the coefficients of $\alpha$ as in Lemma \ref{intJdisk2}. Applying Carleman Similarity Principle we obtain the following lemma.

\begin{lemma} \label{highJdisk2}
Let $(M,J)$ be an almost complex $2n$-manifold and $u:D \rightarrow M$ a smooth, embedded $J$-holomorphic disk. Then for any closed form $\alpha$ in $\Omega^{n,0}_{\R}$ there exists a neighbourhood $U \subset M$ of $u(0)$ and a nowhere vanishing form $\phi$ in $\Omega^{n,0}_{\R}(U)$ such that for $\alpha$ expressed in terms of the basis $\{\phi, J\phi\}$
\begin{equation} \label{al}
\alpha = f \phi + g J \phi
\end{equation}
 on $U$, the function $(f \circ u) + i (g \circ u)$ is holomorphic  on $u^{-1}(u(D)\cap U)$. 
\end{lemma}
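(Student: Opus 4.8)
The plan is to reproduce the proof of Lemma~\ref{intJdisk2} almost line for line, with the single normal contraction $\partial_3 \inmult (\,\cdot\,)$ there replaced by an iterated contraction against $n-1$ of the normal coordinate vector fields. Work in the coordinates $(z^1,\dots,z^n)=(x^1,\dots,x^{2n})$ constructed above, so that $u(D)$ is cut out by $z^2=\dots=z^n=0$ and $J|_{u(D)}=J_0$ is the block–diagonal standard structure. Let $\phi_0$ be the extension to a neighbourhood $U$ of $u(0)$ of $\Re[\,dz^1\wedge\cdots\wedge dz^n\,]$ described before the statement, and write $\alpha=f_0\,\phi_0+g_0\,J\phi_0$ for smooth real functions $f_0,g_0$ on $U$. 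Since $\alpha$ is closed,
\begin{equation*}
0=d\alpha=df_0\wedge\phi_0+f_0\,d\phi_0+dg_0\wedge J\phi_0+g_0\,d(J\phi_0),
\end{equation*}
an $(n+1)$-form on $U$.

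Next I would contract this identity against the two $(n-1)$-tuples of normal directions $(\partial_3,\partial_6,\partial_8,\dots,\partial_{2n})$ and $(\partial_4,\partial_6,\partial_8,\dots,\partial_{2n})$, each of which kills precisely $dz^2,\dots,dz^n$ leaving $dz^1$, and pull back along $u$; when $n=2$ these reduce to the single contractions $\partial_3\inmult(\,\cdot\,)$ and $\partial_4\inmult(\,\cdot\,)$ of Lemma~\ref{intJdisk2}. Two observations make the computation collapse to the four–dimensional one. First, $u^*\phi_0=u^*J\phi_0=0$, because both forms are built from $dz^2,\dots,dz^n$, which vanish on $u(D)$; hence in the expansion of the iterated contraction of $d\alpha$ the only terms surviving the pullback are those in which all $n-1$ contractions land inside $\phi_0$, respectively $J\phi_0$ — any term retaining an uncontracted $dz^j$ with $j\ge2$ pulls back to zero (as does every ``mixed'' term in which one contraction vector pairs against $df_0$, $dg_0$, or a transverse derivative in $d\phi_0$, $d(J\phi_0)$, since what is left of $\phi_0$ then still carries some $dz^j$, $j\ge 2$). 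Second, since $J=J_0$ along $u(D)$, the surviving contractions compute exactly: writing $z=s+it$ for the holomorphic coordinate on $(D,J_0)$, one gets $u^*(\partial_{2n}\inmult\cdots\inmult\partial_6\inmult\partial_3\inmult\phi_0)=\pm ds$, $u^*(\partial_{2n}\inmult\cdots\inmult\partial_6\inmult\partial_4\inmult\phi_0)=\pm dt$ and the analogous formulas with $J\phi_0$, exactly paralleling the identities $u^*(\partial_3\inmult\phi_0)=-ds=u^*(\partial_4\inmult(-J\phi_0))$, etc., in the proof of Lemma~\ref{intJdisk2}. Consequently the pulled–back equations read, with tildes denoting pullback to $D$,
\begin{equation*}
(d\tilde f_0+J_0\,d\tilde g_0)\wedge ds=\tilde f_0\,\tilde\beta_0+\tilde g_0\,\tilde\gamma_0,\qquad (d\tilde f_0+J_0\,d\tilde g_0)\wedge dt=\tilde f_0\,\tilde\beta_1+\tilde g_0\,\tilde\gamma_1,
\end{equation*}
where $\beta_i,\gamma_i$ are the corresponding iterated contractions of $d\phi_0$ and $d(J\phi_0)$ — smooth $1$-forms, hence with bounded pullbacks. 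Using the identity $\eta\wedge J_0\lambda=-J_0\eta\wedge\lambda$ to pass to components in $z$, this becomes a Cauchy–Riemann system $\bar\partial F_0+C_1F_0+C_2\bar F_0=0$ for $F_0:=\tilde f_0+i\,\tilde g_0$ with $C_1,C_2\in L^\infty$.

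From here the argument is identical to the end of the proof of Lemma~\ref{intJdisk2}. Theorem~\ref{carleman} applied to $F_0$ yields $\delta>0$, a nowhere–zero $\Phi=\Phi_1+i\,\Phi_2\in C^0(B_\delta,\C)$ and a holomorphic $F=\tilde f+i\,\tilde g$ with $F_0=\Phi F$. Set
\begin{equation*}
\phi|_{u(B_\delta)}:=(\Phi_1\circ u^{-1})\,\phi_0+(\Phi_2\circ u^{-1})\,J\phi_0,
\end{equation*}
so that $J\phi|_{u(B_\delta)}=-(\Phi_2\circ u^{-1})\,\phi_0+(\Phi_1\circ u^{-1})\,J\phi_0$; these are nowhere vanishing forms in $\Omega^{n,0}_{\R}(u(B_\delta))$. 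Extend $\phi,J\phi$ to a neighbourhood of $u(0)$ and write $\alpha=f\,\phi+g\,J\phi$; restricting to $u(B_\delta)$ and using $F_0=\Phi F$ gives $f\circ u+i\,g\circ u=F$, which is holomorphic, as required. As in Remark~\ref{!car}, the basis $\{\phi,J\phi\}$ so produced depends on $\alpha$, since the zeroth–order term of the Cauchy–Riemann system above is not a complex multiple of $F_0$.

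The step I expect to be the main obstacle is the bookkeeping in the second paragraph: one must verify that after the iterated contraction of the $(n+1)$-form $d\alpha$ and the pullback along $u$, the only surviving contributions are precisely the first–order terms $(d\tilde f_0+J_0 d\tilde g_0)\wedge ds$, $(d\tilde f_0+J_0 d\tilde g_0)\wedge dt$ together with zeroth–order terms carrying bounded coefficients. No genuinely new idea beyond the four–dimensional case is needed here — the vanishing $u^*\phi_0=u^*J\phi_0=0$, the choice of the contraction tuples as products of normal directions, and the normalization $J|_{u(D)}=J_0$ do all the work — but the degree and sign bookkeeping of the iterated interior products must be carried out with care.
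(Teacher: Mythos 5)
Your argument is essentially the paper's proof: the paper likewise writes $\alpha=f_0\phi_0+g_0\,J\phi_0$, contracts the closedness identity against the $n-1$ normal directions (via $\partial_{z^2},\dots,\partial_{z^n}$, of which your two explicit real tuples are a concrete instantiation) so that after pullback along $u$ only the first-order terms $d\tilde f_0\wedge ds$, $d\tilde g_0\wedge dt$ and bounded zeroth-order terms survive, and then applies Theorem \ref{carleman} exactly as in Lemma \ref{intJdisk2}. The only slips are cosmetic: the contracted quantities $\beta_i,\gamma_i$ are $2$-forms (not $1$-forms), and for odd $n$ the roles of $\pm ds$ and $\pm dt$ in your two contraction identities are interchanged, but in every parity the resulting pair of equations is still a Cauchy--Riemann type system for $\tilde f_0+i\tilde g_0$, so the conclusion is unaffected.
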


\begin{proof} Let $\alpha = f_0 \phi_0 + g_0 J\phi_0$ in terms of the basis $\{\phi_0, J\phi_0 \}$. Then closedness implies,
\begin{equation} \label{dalpha}
0=d \alpha = df_0 \wedge \phi_0 + f_0 d \phi_0 + d g_0 \wedge J\phi_0 + g_0 d(J\phi_0).
\end{equation}
Following the similarity principle argument used in Lemma \ref{intJdisk2} it is enough to verify that $f_0 + i g_0$ satisfies a Cauchy-Riemann type equation.

First remark that 
\begin{align*}
\Re \left[ \partial_{z^2} \inmult ... \inmult \partial_{z^n} \inmult (dz^1 \wedge ... \wedge dz^n) \right]  &=- \Re \left[  i\partial_{z^2} \inmult ... \inmult \partial_{z^n} \inmult (idz^1 \wedge ... \wedge dz^n) \right] \\
&= (-1)^{\frac{n(n-1)}{2}} dx^1, \\
\Re \left[ \partial_{z^2} \inmult ... \inmult \partial_{z^n} \inmult (idz^1 \wedge ... \wedge dz^n) \right] &=\Re \left[  i\partial_{z^2} \inmult ... \inmult \partial_{z^n} \inmult (dz^1 \wedge ... \wedge dz^n) \right] \\  
&= -(-1)^{\frac{n(n-1)}{2}} dx^2.
\end{align*}
This allows us to choose a series of contractions that when applied to \eqref{dalpha} yields the following pair of equations on $D$.
\begin{align*}
d\tilde{f}_0 \wedge ds + \tilde{f}_0 \tilde{\beta} - d\tilde{g}_0 \wedge dt + \tilde{g}_0 \tilde{\gamma} = u^*\Psi_1  &=0, \\
-d\tilde{f}_0 \wedge ds + \tilde{f}_0 \tilde{\beta}' - d\tilde{g}_0 \wedge dt + \tilde{g}_0 \tilde{\gamma}' = u^*\Psi_2  &=0,
\end{align*}
where tildes are used to denote a quantity having been pulled back along $u$, the forms $\beta,\beta'$ are contractions of $d\phi_0$, the forms $\gamma,\gamma'$ are contractions of $d(J\phi_0)$ and $\Psi_i$ are error terms which contain no $dx^1 \wedge dx^2$ terms and hence pull back to $0$.

Arguing identically as in the proof of Lemma \ref{intJdisk2} shows the above pair of equations is a Cauchy-Riemann type system for $\tilde{f}_0 + i \tilde{g}_0$ and that the Similarity Principle gives the desired conclusion.
\end{proof}

This lemma allows us to deduce a unique continuation result for closed sections of the canonical bundle. For completeness we include an elementary proof, but the same result can be deduced from unique continuation for solutions of elliptic PDEs.

\begin{lemma} \label{highnoopen}
Suppose that $\alpha$ is a closed form in $\Lambda^{n,0}_{\R}$, then if $\alpha \equiv 0$ on some open set in $M$, it must vanish identically on the whole of $M$. 
\end{lemma}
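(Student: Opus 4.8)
The plan is to reduce the global unique continuation statement to the local foliation-by-disks picture, exactly as in the four-dimensional Lemma~\ref{noopen}, but now leaning on the higher-dimensional positivity Lemma~\ref{highJdisk2} in place of Lemma~\ref{intJdisk2}. Suppose $\alpha \equiv 0$ on a nonempty open set $V \subset M$, and let $Z = \alpha^{-1}(0)$; then $Z$ has nonempty interior. Let $W$ be the interior of $Z$. The first step is to show $W$ is closed in $M$, which together with connectedness of $M$ forces $W = M$ and hence $\alpha \equiv 0$. So I would pick a boundary point $x \in \partial W$ and aim for a contradiction by producing an open neighbourhood of $x$ contained in $Z$.

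Next I would set up the local model around such an $x$: choose Gaussian coordinates identifying a ball about $x$ with a ball in $\C^n$, and invoke Lemma~\ref{highJdisk} to foliate a neighbourhood of $x$ by $J$-holomorphic disks $Q(D_w)$, $w \in D^{n-1}$, with the freedom to rotate so that $Q(D_0)$ points in any prescribed complex direction. Since $x \in \partial W$, there are points of $W$ arbitrarily close to $x$; choose the direction of the foliation generically so that for a positive-measure (in fact, open dense) set of parameters $w$ near $0$ the disk $Q(D_w)$ passes through the open set $W$, hence meets $Z$ in a non-isolated set. By Lemma~\ref{highJdisk2}, for each such disk the restriction of $\alpha$ (read off through the trivialisation of $\Lambda^{n,0}_\R$) is, up to the nowhere-zero factor $\Phi$, a holomorphic function on the disk; a holomorphic function vanishing on a set with an accumulation point vanishes identically, so $Q(D_w) \subset Z$ for all such $w$. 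A limiting/openness argument (being transverse to a fixed auxiliary foliation is an open condition, as used in the proof of Proposition~\ref{hausdorff}) then upgrades ``for $w$ in an open dense set'' to ``for all $w$ in a neighbourhood of $0$'', whence $Q$ maps an open neighbourhood of $x$ into $Z$, contradicting $x \in \partial W$.

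The main obstacle is the bookkeeping needed to guarantee that enough of the leaves $Q(D_w)$ actually meet the interior $W$: a priori $W$ could be thin near $x$ in the relevant directions, and one must argue that by choosing the foliation direction appropriately (using the $\mathbb{C}P^{n-1}$-worth of choices in Lemma~\ref{highJdisk}, together with the fact that $W$ is open and nonempty near $x$) one can arrange that the disks through a whole small parameter-neighbourhood each intersect $W$ in a set with an accumulation point. In dimension four this is immediate because the leaves through a point accumulating in $W$ automatically sweep out a neighbourhood; in higher dimensions one applies the foliation argument successively along the $n-1$ transverse complex directions, as in the ``$w_1$-directions'' standardisation preceding Lemma~\ref{highJdisk2}, so that the product $Q(D \times D^{n-1})$ is forced into $Z$. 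Once that combinatorial step is in place, the rest is a routine application of Lemma~\ref{highJdisk2} and the identity theorem for holomorphic functions. One could alternatively bypass the geometry entirely and cite unique continuation for the elliptic system satisfied by closed sections of $\Lambda^{n,0}_\R$ (see \cite{HMT}), but the disk argument keeps the paper self-contained.
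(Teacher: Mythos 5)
Your proposal follows essentially the same route as the paper's proof: take the maximal open set $U$ on which $\alpha$ vanishes (your $W$), pick a boundary point $x$, foliate a chart around $x$ by $J$-holomorphic disks via Lemma \ref{highJdisk}, use Lemma \ref{highJdisk2} to see that $\alpha$ restricted to any leaf is (in a suitable trivialisation) a nonvanishing factor times a holomorphic function, so a leaf meeting $U$ in an open set lies entirely in $Z$, and derive a contradiction with maximality. The difference is at exactly the delicate point you flag: the paper simply asserts that for $\rho$ small every leaf $Q(D_w)$ meets $U$, which is not justified if $U$ is thin (cusp- or tube-like) near $x$, whereas your successive foliation along the $n-1$ transverse complex directions genuinely repairs this. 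The clean form of your iteration is: for the first foliation, the set of parameters $w$ whose leaf meets $U$ is a nonempty open subset of $D^{n-1}$ (it is the image of the nonempty open set $U\cap\mathcal N_x$ under the projection along the leaves), and absorbing those leaves produces an open ``slab'' of full extent $\rho$ in the leaf direction contained in $U$; refoliating along a second complex direction, every leaf whose parameter projects into the footprint of the slab meets it in an open set and is absorbed, filling one more complex direction; after $n$ steps every leaf of the last foliation meets the previous slab, so the entire chart, an open neighbourhood of $x$, lies in the zero set, contradicting maximality. One caution: your intermediate claim that a generic choice of direction makes the good parameter set open and dense is neither needed nor clearly true (for $n\ge 3$ a sufficiently thin tube around a totally real $\mathbb{R}^n\setminus\{0\}$ defeats every single fixed direction), so that sentence should be dropped and the conclusion drawn from the iteration alone; with that adjustment your argument is correct, and arguably more careful than the version printed in the paper.
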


\begin{proof}
Suppose that $\alpha$ vanishes on an open subset $U\subset M$. We may further assume that $U$ is the largest open subset where $\alpha$ vanishes. By continuity $\alpha$ vanishes on its closure $\bar U$. If $\bar U\ne M$, choose a point $x\in \partial U:=\bar U\setminus U$. Take a neighbourhood $\mathcal N_x$ of $x$ such that there is a $J$-fibre-diffeomorphism $Q: D\times D^{n-1} \rightarrow \mathcal N_x$. We can take $\rho$ small enough such that each disk $Q(D_w)$ intersects $U$. In particular, for each $w\in D^{n-1}$, $Q(D_w)\cap U$ is an open subset in $Q(D_w)$. However, by Lemma \ref{highJdisk2}, we know $\alpha$ vanishes either at isolated points or totally on $Q(D_w)$. This implies $\alpha|_{Q(D_w)}=0$ for all $w\in D^{n-1}$ and thus $\alpha|_{\mathcal N_x}=0$. Hence $U\cup \mathcal N_x\supsetneq U$, which contradicts the choice of $U$. Hence $\alpha$ vanishes on whole $M$.
\end{proof}

Now Theorem \ref{highdim} follows the same argument as Theorem \ref{4ZJhol}. To show that the $(2n-2)$-dimensional Hausdorff measure of $Z$ is finite we follow the argument of Proposition \ref{hausdorff} but using Lemmas \ref{highJdisk} and \ref{highJdisk2} rather than Lemma \ref{intJdisk2}.

\begin{proof}[Proof of Theorem \ref{highdim}]
Viewing $\alpha$ as a smooth section of the canonical bundle, continuity implies that the zero set is compact.

By compactness we can cover $Z$ by finitely many neighbourhoods which admit $J$-fibre-diffeomorphisms as in Lemma \ref{highJdisk}. So it is enough to show that the intersection of $Z$ with each of these neighbourhoods is of finite $(2n-2)$-dimensional Hausdorff measure. 

Following the arguments of Proposition \ref{hausdorff} we can choose the $J$-fibre-diffeomorphisms as follows. Given $x \in Z$ there is a $J$-fibre-diffeomorphism $Q : D \times D^{n-1} \rightarrow M$ such that $Q(0,0)=x$ and no $J$-holomorphic disk $Q(D_w)$ is contained in $Z$. With such a choice Lemma \ref{highJdisk2} implies that, for each $w \in D^{n-1}$, the intersection $Q(D_w) \cap Z$ is a finite set of points.

Further, by shrinking $D$ if necessary, we may assume without loss of generality that the distortion of $Q$ on the domain $2D \times (2D)^{n-1}$ is bounded by some constant $C>0$.

Define,
\begin{equation*}
g: \bar{D} \rightarrow \N \cup \{0\}, \quad \xi \mapsto \#( Z \cap Q(\bar{D}_w)).
\end{equation*}
Clearly this is an upper semi-continuous function and hence achieves a maximal value, say $N$, at some point $\xi \! \in \! \bar{D}$.  Thus by Lemma \ref{highJdisk2}, we know $Z\cap Q(\bar D_w)$ contains at most $N$ points for all $\xi \in \bar D$. By the Vitali covering lemma we can take a finite cover of the compact set $Z \cap Q(\bar{D} \times \bar{D}^{n-1})$ by balls of radius $\varepsilon$ such that $L$ of these balls are disjoint and the union of  $L$ concentric balls with radius dilated by a factor of $3$ cover. By the distortion assumption each $\varepsilon$ ball intersects $Q(2\bar{D}_w)$ in an open set of area bounded above by $\pi C^2 \varepsilon^2$. The coarea formula then yields,
\begin{equation*}
N\pi C^2 \varepsilon^2 \cdot \pi C^{2n-2} (2 \rho)^{2n-2} > L  \omega_{2n} \varepsilon^{2n},
\end{equation*}
where $\omega_{2n}$ is the volume of the unit $2n$-ball. Hence there is a constant $C'>0$ such that $C' \varepsilon^{-(2n-2)}$ balls of radius $3 \varepsilon$ are enough to cover $Z \cap Q(\bar{D} \times \bar{D}^{n-1})$. This finishes the proof that $\mathcal{H}^{2n-2}(Z) < \infty$.

Finally identical to the argument of Proposition \ref{icpca},  we can verify that the assignment $I_{\alpha}$ of Section \ref{secPCA} defines a positive cohomology assignment for $Z$ in the sense of Definition \ref{PCA}. 
\end{proof}
Since the first Chern class of the complex line bundle $\Lambda_{\R}^{n,0}$ is $K_J$ ({\it e.g.} by the same argument as Proposition 4.3 in \cite{ZintJ}), if Question \ref{conj} is answered affirmatively, the Poincar\'e dual of the homology class of the pseudoholomorphic $(n-1)$-subvariety supported on $Z$ is $K_J$. 

\section{$h_J^-$ is a birational invariant of almost complex $4$-manifolds}\label{bir}

In this section we again specialise to closed almost complex four manifolds. The results of \cite{ZintJ} suggest that the right notion of birational morphism between almost complex four manifolds are degree $1$ pseudoholomorphic maps. Thus we say that two closed almost complex four manifolds $M_1$ and $M_2$ are birational if there exist closed almost complex manfiolds $X_1,...,X_{n+1},Y_1,...,Y_n$ such that $M_1=X_1$, $M_2=X_{n+1}$ and there are degree one pseudoholomorphic maps $\phi_i : Y_i \rightarrow X_i$ and $\psi_i : Y_i \rightarrow X_{i+1}$ for all $i=1,...,n$.

We denote by $\Omega^2$ the space of 2-forms on $M$ ($C^{\infty}$-sections of the bundle $\Lambda^2$), $\Omega_J^+$ the space of $J$-invariant 2-forms, {\it etc.} Let also $ \mathcal Z^2$ denote the space of closed 2-forms on $M$ and let $\mathcal Z_J^{\pm} =\mathcal Z^2 \cap \Omega_J^{\pm}$.

The cohomology groups $$H_J^{\pm}(M)=\{ \mathfrak{a} \in H^2(M;\mathbb R) | \exists \; \alpha\in \mathcal Z_J^{\pm} \mbox{ such that } [\alpha] = \mathfrak{a} \}$$ are defined in \cite{LZ}. The dimensions  of  the vector spaces $H_J^{\pm}(M)$ are denoted as $h_J^{\pm}(M)$. It is proven in \cite{DLZ} that $h_J^++h_J^-=b_2$ when $\dim_{\R} M=4$. 

In this section we prove that $h_J^-$ is a birational invariant of almost complex four manifolds. 

\begin{theorem} \label{birational}
Let $\psi : (M_1,J_1) \rightarrow (M_2,J_2)$ be a degree $1$ pseudoholomorphic map between closed, connected almost complex $4$-manifolds. Then $h_{J_1}^-(M_1)=h_{J_2}^-(M_2)$.
\end{theorem}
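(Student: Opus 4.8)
The plan is to exploit the two facts already available to us: the Hodge-theoretic fact that $h_{J}^+ + h_{J}^- = b_2$ on a closed almost complex $4$-manifold \cite{DLZ}, and the Hartogs-type extension property (Proposition \ref{Hartog}) for closed $J$-anti-invariant $2$-forms that follows from Lemma \ref{intJdisk2} and the foliation-by-disks technique. Since a degree one pseudoholomorphic map $\psi : (M_1,J_1)\to(M_2,J_2)$ induces an isomorphism $\psi^* : H^2(M_2;\R)\to H^2(M_1;\R)$ (it is degree one, so $\psi_*\psi^* = \mathrm{id}$ on $H_2$, hence $\psi^*$ is injective, and by Poincar\'e duality plus equality of Betti numbers it is an isomorphism; alternatively one cites \cite{ZintJ} for $b_2(M_1)=b_2(M_2)$), it suffices to show that $\psi^*$ carries $H_{J_2}^-(M_2)$ into $H_{J_1}^-(M_1)$ and that it carries $H_{J_1}^+(M_1)$ into $H_{J_2}^+(M_2)$; the two inclusions together with $h^+_J + h^-_J = b_2$ on both sides force equality of dimensions. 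Actually the cleanest route is: show $\psi^*\big(H_{J_2}^-(M_2)\big)\subseteq H_{J_1}^-(M_1)$ (giving $h_{J_1}^-\ge h_{J_2}^-$) and, for the reverse, show any closed $J_1$-anti-invariant form on $M_1$ descends cohomologically to a closed $J_2$-anti-invariant form on $M_2$, using the Hartogs extension across the exceptional locus.

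First I would handle the easy inclusion. If $\beta$ is a closed $J_2$-anti-invariant $2$-form on $M_2$, then $\psi^*\beta$ is closed; moreover $\psi$ is $(J_1,J_2)$-holomorphic, so $d\psi\circ J_1 = J_2\circ d\psi$ wherever $d\psi$ is defined, which gives $(\psi^*\beta)(J_1 X, J_1 Y) = \beta(J_2 d\psi X, J_2 d\psi Y) = -\beta(d\psi X, d\psi Y) = -(\psi^*\beta)(X,Y)$ at every point where $\psi$ is a local diffeomorphism, and hence everywhere by continuity. So $\psi^*\beta$ is a closed $J_1$-anti-invariant $2$-form and $[\psi^*\beta] = \psi^*[\beta]\in H_{J_1}^-(M_1)$. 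Since $\psi^*$ is injective, this shows $h_{J_1}^-(M_1)\ge h_{J_2}^-(M_2)$.

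For the reverse inequality, the point is to go the other way: given a closed $J_1$-anti-invariant $2$-form $\alpha$ on $M_1$, I want to produce a closed $J_2$-anti-invariant form $\beta$ on $M_2$ with $\psi^*[\beta] = [\alpha]$. The map $\psi$ restricts to a $(J_1,J_2)$-biholomorphism from $M_1\setminus E_1$ to $M_2\setminus E_2$ for suitable "exceptional" sets $E_1\subset M_1$, $E_2 = \psi(E_1)\subset M_2$ which are (unions of) $J$-holomorphic subvarieties of real codimension $\ge 2$ (this is the structure of degree one pseudoholomorphic maps established in \cite{ZintJ}). Transporting $\alpha|_{M_1\setminus E_1}$ via this biholomorphism gives a closed $J_2$-anti-invariant form $\beta_0$ on $M_2\setminus E_2$. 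Now Proposition \ref{Hartog}, the Hartogs-type extension for closed $J$-anti-invariant forms, lets me extend $\beta_0$ across $E_2$ to a closed $J_2$-anti-invariant form $\beta$ on all of $M_2$ (the codimension of $E_2$ being at least two is exactly what makes the extension work, just as in \cite{CZ}). Then $\psi^*\beta$ agrees with $\alpha$ on $M_1\setminus E_1$, and since both are continuous $2$-forms on $M_1$ and $E_1$ has measure zero (indeed empty interior), $\psi^*\beta = \alpha$ everywhere; in particular $\psi^*[\beta] = [\alpha]$. Thus $\psi^*$ maps $H_{J_2}^-(M_2)$ \emph{onto} $H_{J_1}^-(M_1)$, giving $h_{J_2}^-(M_2)\ge h_{J_1}^-(M_1)$ and completing the proof.

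The main obstacle is the surjectivity step, and specifically making the Hartogs extension argument rigorous: one must verify that the closed $J_2$-anti-invariant form $\beta_0$ defined only on $M_2\setminus E_2$ genuinely extends as a \emph{smooth closed $J_2$-anti-invariant} form across the subvariety $E_2$, with control near the singular points of $E_2$, and that this is precisely the content of Proposition \ref{Hartog}. A secondary subtlety is pinning down the structure of $\psi$ — that $E_1$, $E_2$ are closed sets of empty interior (so that "agreement off $E_i$" propagates to global agreement by continuity), which follows from the fact that a degree one pseudoholomorphic map is a biholomorphism away from a proper $J$-holomorphic subvariety. Once Proposition \ref{Hartog} is in hand, both inequalities are short, and they combine with $h^+_J+h^-_J=b_2$ (or more directly with injectivity plus surjectivity of $\psi^*$ on the relevant subspaces) to yield $h_{J_1}^-(M_1)=h_{J_2}^-(M_2)$.
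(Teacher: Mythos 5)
Your overall strategy is the paper's: transport a closed $J_1$-anti-invariant form through the biholomorphism off the exceptional locus and extend it by the Hartogs-type result, and your ``easy inclusion'' plus injectivity of $\psi^*$ on $H^2$ (from degree one) is a harmless variant of the paper's form-level argument, which instead uses that $\mathcal Z_J^-\cong H_J^-$ and unique continuation. But there is a genuine flaw in your surjectivity step as written: you take the exceptional image $E_2=\psi(E_1)\subset M_2$ to be a union of $J$-holomorphic subvarieties of real codimension $\ge 2$ and assert that ``the codimension of $E_2$ being at least two is exactly what makes the extension work.'' Proposition \ref{Hartog} extends a closed $J$-anti-invariant form across an isolated point (hence across a finite set of points), not across a codimension-two subvariety, and no such extension across a real codimension-two set can hold in general: already in the integrable model the closed anti-invariant form $\Re\!\left(z_1^{-1}\,dz_1\wedge dz_2\right)$, defined near the curve $\{z_1=0\}$ but off it, admits no smooth extension. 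So if $E_2$ really were a curve, the step you lean on would fail.

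What rescues the argument --- and what the paper actually uses --- is the structure theorem for degree-one pseudoholomorphic maps between closed almost complex $4$-manifolds (Proposition 5.9 of \cite{ZintJ}): there is a \emph{finite} set $Y\subset M_2$ such that $\psi$ restricts to a diffeomorphism off $\psi^{-1}(Y)$; the positive-dimensional exceptional subvarieties live in $M_1$, and their images in $M_2$ are the points of $Y$. With $E_2=Y$ finite, $(\psi^{-1})^*\tilde{\alpha}$ is a closed $J_2$-anti-invariant form on $M_2\setminus Y$ and Proposition \ref{Hartog} applies verbatim at each point of $Y$, producing $\beta\in\mathcal Z_{J_2}^-(M_2)$; the remainder of your argument (agreement of $\psi^*\beta$ with $\tilde{\alpha}$ off a set with empty interior, hence everywhere, and the dimension count) then goes through. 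In short, the missing ingredient is the correct location and size of the exceptional image set; once ``codimension $\ge 2$ subvariety'' is replaced by ``finite set of points,'' your proof coincides with the paper's.
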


The basic strategy is similar to the proof of Theorem 5.3 in \cite{CZ}. However, we need a version of Hartogs' extension theorem for closed $J$-anti-invariant forms. This relies on the trivialisation of $\Lambda_J^-$ over embedded $J$-holomorphic disks provided by Lemma \ref{intJdisk2}.

Again it is convenient to to view $J$-anti-invariant $2$-forms as a smooth sections of the complex line bundle $\Lambda_J^-$ over $M$. We shall denote such a section associated with a $J$-anti-invariant $2$-form $\alpha$ by $\Gamma_{\alpha}: M \rightarrow \Lambda_J^-$. By Lemma \ref{intJdisk2} there is a trivialisation of $\Lambda_J^-$ over a given embedded $J$-holomorphic disk $u:D \rightarrow M$ such that $\Gamma_{\alpha} \circ u$ may be viewed as a holomorphic function $\Gamma_{\alpha} \circ u : D \rightarrow \C$ when $\alpha$ is closed. Notice that once a trivialisation has been chosen we abuse notation and ignore the holomorphic projection of $\Lambda_J^- \cong D \times \C$ onto its second factor. We identify the basis $\{\phi, J\phi\}$ in Lemma \ref{intJdisk2} with $1$ and $i$ in $\C$ under the trivialisation.

Before proceeding it is convenient to make some remarks about Lemma \ref{intJdisk2}. First consider $U \subset M$ an open, connected subset, $\alpha$ a closed $J$-anti-invariant $2$-form defined on $U \! \setminus \! \{ p \}$ for some $p \in U$ and $u : D \rightarrow M$ an embedded $J$-holomorphic disk with $u(0)=p$. It follows from the arguments of Lemma \ref{intJdisk2} that, after possibly shrinking $u(D)$, there is a holomorphic structure on $\Lambda_J^-$ over $u(D) \! \setminus \! \{ p \}$ such that $\Gamma_{\alpha} \circ u : D \! \setminus \! \{ 0 \} \rightarrow \Lambda_J^-$ is holomorphic. 

Indeed, by Lemma \ref{intJdisk2}, we can cover $D \! \setminus \! \{ 0 \}$ by subdisks $D_i$ such that $\Lambda_J^-|_{u(D_i)}$ is trivialised with $\Gamma_{\alpha} \circ u:D_i \rightarrow \Lambda_J^-$ a holomorphic section. Furthermore, we assume the zero locus $\alpha^{-1}(0)\cap u(\partial D_i)=\emptyset$. We look at the transition function $\beta+i\gamma$ of the line bundle $\Lambda_J^-|_{u(D_i)}$ for $D_1\cap D_2$ say. The form $\alpha$ could be represented in terms of two bases $$\alpha=f_1\phi_1+g_1J\phi_1=f_2\phi_2+g_2J\phi_2.$$ By computation, $(f_1+ig_1)=(f_2+ig_2)(\beta+i\gamma)$. In other words, writing $h_i=(\Gamma_{\alpha} \circ u)|_{D_i}$ we can write transition functions as $\tau_{ij}=\frac{h_i}{h_j}$ on $D_{ij}=D_i \cap D_j$. Since the $h_i$ are holomorphic, and the transition functions are nowhere zero, we know $\tau_{ij}$ are holomorphic.

This transition data thus defines a holomorphic line bundle structure on $\Lambda_J^-$ over $u(D) \! \setminus \! \{ p \}$ such that $\Gamma_{\alpha} \circ u : D \! \setminus \! \{ 0 \} \rightarrow \Lambda_J^-$ is holomorphic. Furthermore $D \! \setminus \! \{ 0 \}$ is Stein and hence, by Oka's principle, the bundle is isomorphic to $D \! \setminus \! \{ 0 \} \times \C$. This allows one to view $\Gamma_{\alpha} \circ u : D \! \setminus \! \{ 0 \} \rightarrow \C$ as a holomorphic complex valued function. In summary we have found a trivialisation of $\Lambda_J^-$ over $u(D) \! \setminus \! \{ p \}$ such that $\Gamma_{\alpha} \circ u : D \! \setminus \! \{ 0 \} \rightarrow \C$ is a holomorphic function.

Secondly, for $\varepsilon \in (0,1)$, let $u_{\varepsilon} : D \rightarrow M$ be a smooth family of embedded $J$-holomorphic disks. For each $\varepsilon \in (0,1)$ the arguments of \S2.1 provide coordinates $x^i_{\varepsilon}$ such that $J=J_0$ along $u_{\varepsilon}(D)$. Moreover the $x^i_{\varepsilon}$ vary smoothly in $\varepsilon$. Defining $\phi_{0,\varepsilon}$ by \eqref{phi} and following the arguments of Lemma \ref{intJdisk2} we obtain a family of functions $v_{0,\varepsilon}=f_{0,\varepsilon} + i g_{0, \varepsilon}$ satisfying a Cauchy-Riemann type equation $\bar{\partial} v_{0,\varepsilon} + C_1^{\varepsilon} v_{0,\varepsilon} +C_2^{\varepsilon} \bar v_{0,\varepsilon}  =0$, where $v_{0,\varepsilon}$ and $ 
C^{\varepsilon}_1, C^{\varepsilon}_2$ vary smoothly in $\varepsilon$. Hence the resulting family of holomorphic functions $f_{\varepsilon}+ig_{\varepsilon}$ and forms $\phi_{\varepsilon}$ vary smoothly in $\varepsilon$. That is, the trivialisations over each $u_{\varepsilon}(D)$ vary smoothly.

\begin{prop} \label{Hartog}
Let $(M,J)$ be an almost complex $4$-manifold, $U \subset M$ open and $p \in U$. Suppose that $\alpha$ is a closed $J$-anti-invariant $2$-form defined on $U \! \setminus \! \{ p \}$. Then $\alpha$ extends smoothly to $U$.
\end{prop}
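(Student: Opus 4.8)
The plan is to reduce the extension problem over the point $p$ to the one-variable Riemann removable singularity theorem applied along a generic foliation of a punctured neighbourhood of $p$ by embedded $J$-holomorphic disks, and then to check that the extensions over the individual disks fit together smoothly into a global extension. Concretely, first I would pick a $J$-fibre-diffeomorphism $Q : D \times D \rightarrow U$ with $Q(0,0)=p$ supplied by Lemma 5.4 of \cite{T}, so that $U \! \setminus \! \{ p \}$ is foliated by the embedded $J$-holomorphic disks $u_w := Q(\,\cdot\,,w)$ for $w \in D$, among which only $u_0$ passes through $p$; for $w \ne 0$ the disk $u_w$ misses $p$ entirely, while $u_0$ has the single puncture at $\xi = 0$. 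By the first remark following the statement (the punctured version of Lemma \ref{intJdisk2} via Oka's principle), there is, after shrinking, a trivialisation of $\Lambda_J^-$ over $u_0(D) \! \setminus \! \{ p \}$ in which $\Gamma_{\alpha} \circ u_0 : D \! \setminus \! \{ 0 \} \rightarrow \C$ is a holomorphic function.

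The key point is then a boundedness estimate: I would show that $\Gamma_{\alpha} \circ u_0$ stays bounded near $\xi = 0$, so that the classical Riemann removable singularity theorem forces it to extend holomorphically across $0$. Boundedness comes from the smoothly varying family of trivialisations described in the second remark before the proposition, applied to the family $u_w$ (which degenerates to $u_0$ as $w \to 0$): for each $w \ne 0$ the disk $u_w$ is unpunctured, so $\Gamma_{\alpha}\circ u_w$ is a genuine holomorphic function $D \to \C$ whose sup-norm on a slightly smaller disk is controlled, uniformly in $w$, by the sup-norm of $\alpha$ on a fixed compact neighbourhood of $p$ in $U \! \setminus \! \{ p \}$ together with the uniform bounds on the transition data $C_1^\varepsilon, C_2^\varepsilon$ and the nowhere-vanishing factors $\Phi_{1,\varepsilon}, \Phi_{2,\varepsilon}$ coming from the Carleman principle (Theorem \ref{carleman}) — here one uses that $\alpha$ is bounded on compact subsets of $U\setminus\{p\}$ simply because it is a smooth form there. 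Letting $w \to 0$ and using the smooth dependence of the trivialisations on $w$, the same bound persists for $\Gamma_{\alpha}\circ u_0$ on $D' \setminus \{0\}$ for a slightly smaller disk $D'$. Hence $\Gamma_{\alpha}\circ u_0$ is bounded near the puncture and extends holomorphically, which means $\alpha$ itself extends continuously across $p$ along $u_0(D)$ (and, together with its derivatives along the disk, smoothly).

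Having extended $\alpha$ along the single disk $u_0$ through $p$, I would promote this to a smooth extension on all of $U$. One way: in the $(\xi,w)$ coordinates the form $\alpha$ is already smooth on $\{\xi \ne 0\} \cup \{w \ne 0\}$, i.e. off the single slice $\{w=0\}$ minus $p$; the only locus where smoothness is in question is the point $p=(0,0)$ itself. Writing $\alpha = f_0 \phi_{0} + g_0 J\phi_{0}$ in the smoothly-varying basis of the second remark, the coefficient function $v_0 = f_0 + i g_0$ on $D \times D$ is smooth away from $(0,0)$, holomorphic in $\xi$ on each slice, and — by the estimate above, now applied slicewise together with Cauchy estimates for the $\xi$-derivatives — bounded near $(0,0)$ along with all its $\xi$-derivatives, uniformly in $w$. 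Interior elliptic estimates for the (perturbed) $\bar\partial$-equation $\bar\partial v_{0,w} + C_1^w v_{0,w} + C_2^w \bar v_{0,w} = 0$ then bootstrap this to a uniform $C^k$ bound near $(0,0)$ for every $k$; since a function that is smooth off a point and has locally bounded derivatives of all orders is smooth (its Taylor data at $p$ is determined by limits of difference quotients from the punctured side), $v_0$ — and hence $\alpha$ — extends smoothly to $U$.

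The main obstacle I anticipate is the uniformity of the Carleman-type trivialisation as $w \to 0$: the factor $\Phi$ produced by Theorem \ref{carleman} depends on the solution $v_{0,w}$ itself (Remark \ref{!car}), not merely on the coefficients $C_1^w, C_2^w$, so one must argue that the family $\{\Phi_{\varepsilon}\}$ nonetheless stays uniformly bounded away from $0$ and $\infty$ on a fixed disk as the parameter degenerates — this is exactly the content of the "smoothly varying trivialisations" remark and is what makes the slicewise removable-singularity argument assemble into a genuine extension rather than a family of unrelated one-variable statements. Everything else (Riemann's theorem, Cauchy estimates, elliptic bootstrapping) is standard once that uniformity is in hand.
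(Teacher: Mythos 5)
Your geometric setup (the $J$-fibre foliation and the punctured-disk trivialisation) matches the paper's, but your central analytic step --- a bound on $\Gamma_\alpha\circ u_w$ uniform as $w\to 0$, followed by Riemann's removable singularity theorem on the central fibre --- has a genuine gap exactly where you flag it, and the source you cite does not close it. The ``smoothly varying trivialisations'' remark applies to a family of embedded disks on which $\alpha$ is smooth; here that is only the family $u_w$ with $w\neq 0$, and it gives smooth dependence on $w$ in that open range, with no uniform two-sided control of the Carleman factors $\Phi_w$ as $w\to 0$, precisely because the limiting disk passes through $p$, where $\alpha$ is a priori uncontrolled. Theorem \ref{carleman} as stated is purely qualitative (small $\delta$, and $\Phi$ depends on the solution itself by Remark \ref{!car}), so to make your estimate work you would have to reopen the proof of the similarity principle and show that $\log|\Phi_w|$ is bounded in terms of $\|C_1^w\|_{\infty}+\|C_2^w\|_{\infty}$ and the radius alone; you would also need to repair the quantifier slip that there is no ``compact neighbourhood of $p$ in $U\setminus\{p\}$'' --- what is actually available is the compact union of boundary circles $u_w(\partial D')$, on which $|\alpha|$ is bounded, combined with the maximum principle. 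Even granting all of this, what the limit $w\to 0$ controls is the coefficient of $\alpha$ in the smooth reference frame $\phi_{0,0}$ (which is defined across $p$), not the holomorphic function in the Oka trivialisation over the punctured disk: the two frames differ by a nowhere-vanishing factor with no control at the puncture, so Riemann's theorem applied to $\Gamma_\alpha\circ u_0$ in that trivialisation is not the statement you reach; you would instead need a removable-singularity result for a bounded solution of the Cauchy--Riemann-type system of Lemma \ref{intJdisk2} in the smooth frame.

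There is a second gap in the final step: interior elliptic estimates for the slicewise equation $\bar\partial v_{0,w}+C_1^w v_{0,w}+C_2^w\bar v_{0,w}=0$ control only $\xi$-derivatives, so they cannot produce $C^k$ bounds in the transverse $w$-directions at $(0,0)$, and the principle ``smooth off a point with locally bounded derivatives of all orders'' is therefore not reached by the estimates you invoke. You would need either the paper's device --- the fixed transverse $J$-holomorphic slice $Q(\{0\}\times D)$, along which $\Gamma_\alpha(0,\cdot)$ is holomorphic, together with varying the tangent direction $\kappa\in\mathbb{CP}^1$ --- or the observation that a bounded closed $J$-anti-invariant form is a bounded self-dual harmonic form for a compatible metric, whose point singularities are removable by elliptic theory. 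Note that the paper's proof avoids boundedness near $p$ altogether: it defines $a_j(w)=\int_{|\xi|=\rho}\Gamma_\alpha(\xi,w)\,\xi^{-j-1}d\xi$ on circles at fixed distance from $p$, so each $a_j$ is smooth in $w$ including at $w=0$; for $w\neq 0$ the negative coefficients vanish because the slice functions are holomorphic on the whole disk, hence by continuity they vanish at $w=0$ as well, which removes the singularity on the central fibre with no estimate near the puncture, and the transverse slice plus the $\kappa$-family supply the consistency of the value at $p$ and the remaining directions. As written, both halves of your argument rest on steps that the results you cite do not provide.
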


\begin{proof}
First, by shrinking $U$ if necessary, we may assume that there is a $J$-fibre diffeomorphism $Q : D \times D \rightarrow U$ centred at $p$ such that $Q(\{ 0 \} \times D)$ and each $Q(D_w)$ is an embedded $J$-holomorphic disk. 

We trivialise $\Lambda_J^-$ with respect to $\alpha$, first along $Q(\{ 0 \} \times D) \! \setminus \! \{ p \}$ then along each $Q(D_w)$ and $Q(D_0) \! \setminus \! \{p \}$. By the remarks preceding the proposition $\Gamma_{\alpha}$ may be considered a smooth map $\Gamma_{\alpha} : (D \times D) \!\setminus \! \{ (0,0) \} \rightarrow \C$ such that
\begin{itemize}
\item[(i)] $\Gamma_{\alpha}(\cdot, w) : D \rightarrow \C$ is holomorphic for each $w \neq 0$,
\item[(ii)] $\Gamma_{\alpha}(\cdot, 0) : D \! \setminus \! \{ 0 \} \rightarrow \C$ is holomorphic,
\item[(iii)] $\Gamma_{\alpha}(0,\cdot) : D \! \setminus \! \{ 0 \} \rightarrow \C$ is holomorphic.
\end{itemize}

For each $j \in \mathbb{Z}$ define, $$a_j(w) := \int_{|\xi|=\rho} \frac{\Gamma_{\alpha}(\xi,w)}{\xi^{j+1}} \, d\xi.$$ Clearly this is a smooth function $a_j : D \rightarrow \C$ for all $j \in \mathbb{Z}$. Moreover, by (i), we have $a_0(w)=\Gamma_{\alpha}(0,w), w\ne 0$, and hence $a_0 : D \! \setminus \! \{ 0 \} \rightarrow \C$ is holomorphic.

For each $w \neq 0$ the Cauchy Integral formula gives the following Laurent series $$\Gamma_{\alpha}(\xi,w) = \sum_{j=-\infty}^{\infty} a_j(w) \xi^j = \sum_{j=0}^{\infty} a_j(w) \xi^j,$$ where the second equality follows from (i). In particular $a_j(w)=0$ for all $j<0$ and $w \neq 0$. By smoothness of $\alpha$ on $U\setminus \{p\}$ and the trivialisations along the disks, it follows that $a_j(0)=0$ for all $j<0$. Applying Cauchy Integral formula again yields, $$\Gamma_{\alpha}(\xi,0) = \sum_{j=-\infty}^{\infty} a_j(0) \xi^j = \sum_{j=0}^{\infty} a_j(0) \xi^j,$$ proving that $\Gamma_{\alpha}(\xi,0)$ is holomorphic on $D$ with $\Gamma_{\alpha}(0,0) = a_0 (0)$.

Let us now verify that $\Gamma_{\alpha}(0,w)$ can be extended to a holomorphic function on $D$ with value $a_j(0)$ at the origin. To this end notice that, by smoothness, $$\frac{\partial }{\partial \bar{w}} \Gamma_{\alpha}(0,w) = \int_{|\xi|=\rho} \frac{\frac{\partial }{\partial \bar{w}}\Gamma_{\alpha}(\xi,w)}{\xi} \, d \xi=0, \quad \forall w \! \in \! D.$$ So $\Gamma_{\alpha}(0,w)$ extends as a holomorphic function to $D$ and $\Gamma_{\alpha}(0,0) = a_0(0)$.

As remarked in Section \ref{HD2} the $J$-fibre diffeomorphism may be chosen such that $Q(\{ 0 \} \times D)$ is a given $J$-holomorphic disk and $Q(D_0)$ is tangent at $p$ to a given complex direction $\kappa \in \mathbb C P ^1$ transverse to $Q(\{ 0 \} \times D)$. Varying $\kappa$ we produce a family of embedded $J$-holomorphic disks whose complex tangent directions cover a neighbourhood of $\kappa$. Moreover, each of these disks is the $D_0$ fibre of a $J$-fibre diffeomorphism. We can choose finitely many such families whose  union covers a neighborhood of $p$, and their tangent directions cover $\mathbb CP^1$. Since $Q(\{ 0 \} \times D)$ is fixed the argument above provides a holomorphic extension in each complex direction $\kappa$ with the same extended value at $p$. For the disks not transverse to the given $J$-holomorphic disk, we choose any other disk in the family to complete the proof. 
\end{proof}

With this Hartogs type extension in hand, we are able to prove Theorem \ref{birational} (which is identical to Theorem \ref{birationalintro}).

\begin{proof}[Proof of Theorem \ref{birational}]
Since $\psi$ is pseudoholomorphic the pullback of $2$-forms along $\psi$ induces a map $$\psi^* : \mathcal Z_{J_2}^-(M_2) \rightarrow \mathcal Z_{J_1}^-(M_1).$$ We claim that this induced map is an isomorphism. If this is the case then  this induces an isomorphism between $H_{J_1}^-(M_1)$ and $H_{J_2}^-(M_2)$ since $\mathcal Z_J^-$ is isomorphic to $H_J^-$ (see {\it e.g.} \cite{DLZ}).

By Proposition 5.9 of \cite{ZintJ} there exits a finite set $Y \subset M_2$ such that $u|_{M_1 \setminus \psi^{-1}(Y)}$ is a diffeomorphism and $\psi^{-1}(y)$ is a pseudoholomorphic subvariety for all $y \in Y$. Thus, given $\alpha \in \mathcal Z_{J_2}^-(M_2)$, it follows that if $\psi^*(\alpha) = 0$ then $\alpha|_{M_2 \setminus Y}=0$ and hence smoothness implies that $\alpha \equiv 0$.

It is left to show that $\psi^* : \mathcal Z_{J_2}^-(M_2) \rightarrow \mathcal Z_{J_1}^-(M_1)$ is surjective. Since $\psi|_{M_1 \setminus \psi^{-1}(Y)}$ is a diffeomorphism we can pull back a given $\tilde{\alpha} \in \mathcal Z_{J_1}^-(M_1)$ to give a $J_2$-anti-invariant form $\alpha := (\psi^{-1})^*(\tilde{\alpha}) \in \mathcal Z_{J_2}^-(M_2 \setminus Y).$ As $Y$ is a finite set Proposition \ref{Hartog} gives an extension to a form $\alpha \in \mathcal Z_{J_2}^-(M_2)$ which concludes the proof.
\end{proof}

\section{Further discussions}
In this section, we discuss two applications. In Section \ref{mul}, we provide a definition of multiplicity of zeros for a continuous function $u:  D^2 \rightarrow \mathbb R^2$ which generalises the multiplicity of zeros of a holomorphic function. In Section \ref{SW}, we will discuss the relation of Theorem \ref{4ZJhol} with Taubes' program of SW=Gr for $4$-manifolds.
\subsection{Multiplicity of zeros for a continuous function $u:  D^2 \rightarrow \mathbb R^2$}\label{mul}
An amusing application is to define the multiplicity of isolated zeros of a continuous function $u:  D^2 \rightarrow \mathbb R^2$ from the open unit disk $D^2$, as a generalisation of the multiplicity of zeros of a holomorphic function. This subsection could also be viewed as some explicit calculations of the intersection number used throughout the paper. 

Consider a trivial bundle $\mathcal O$ over $D^2$ of real rank two. A continuous function $u:  D^2 \rightarrow \mathbb R^2$, $u(x, y)=(f(x, y), g(x, y))$, is called admissible if $\overline{u^{-1}(0)}\cap \partial D^2=\emptyset$. By taking complex coordinate $z=x+iy$ on $D$ and using the standard identification of $\R^2 = \C$ we can write $u(z)=f(z, 
\bar z)+ig(z, \bar z)$, where $f$ and $g$ are real valued functions. It is clear that this definition of multiplicity also works for an admissible function $u: B^n\rightarrow \mathbb R^n$. 

\begin{example}
The function $u(z)=x$ is not admissible. All non-trivial  holomorphic functions are admissible. The function $u(z)=|z|^2$ is admissible. 
\end{example}

For an admissible function $u:  D^2 \rightarrow \mathbb R^2$, we define the sum of multiplicities of zeros inside $D^2$ by perturbation. We perturb $u$ to a smooth function $\tilde u: D^2 \rightarrow \mathbb R^2$ such that the Jacobian of each zero of $\tilde u$ is non-degenerate. It is equivalent to viewing the function $u$ as a map to the total space of the trivial bundle $\mathcal O$, and requiring the perturbed $\tilde u$ to have transverse intersection with the zero section. Then the multiplicity $I(u)$ is the sum of the signs of the Jacobian of each zero of $\tilde u$. The multiplicity $I(u)$ is independent of the choice of the perturbation $\tilde u$. 

\begin{example}
When $u$ is a holomorphic function, $I(u)$ is just the sum of the multiplicities of all the zeros of $u$ inside the unit disk. Each zero contributes positively to the sum.

One may choose a holomorphic perturbation $u'$ such that $u'$ has more zeros than $u$ over $\mathbb R^2$ and  each zero will contribute positively to the index. A generic holomorphic perturbation would have $I(u)$ many zeros inside the unit disk. 

 On the other hand, if $u$ is an anti-holomorphic function, then each zero contributes negatively.
\end{example}

The following is an explicit example that the multiplicity is independent of the perturbation as long as the Jacobian is non-degenerate at any zero point. 
\begin{example}
Let $u(z)=|z|^2$. Then $I(u)=0$. There are many ways of admissible perturbations. For example, if $\tilde u(z)=|z|^2+\varepsilon z$, then it has two zeros $z=0$ and $z=-\bar{\ep}$. The Jacobian matrix has determinants $|\ep|^2$ and $-|\ep|^2$ at $0$ and $-\bar\ep$ respectively. This implies $I(u)=0$. 

We can also calculate it using other perturbations. A natural one is $\tilde{\tilde {u}}(z)=u(z)+c$. When $c>0$, there will be no zeros in $D$, which again implies $I(u)=0$ immediately. When $c<0$, it is not a good perturbation to calculate the multiplicity since the Jacobian is degenerate at the zero set.
\end{example}

In fact, our multiplicity is uniquely defined in a natural sense. 

\begin{prop}\label{muluni}
The multiplicity $I(u)$ is the unique functional satisfying the following five properties:
\begin{itemize}
\item  $I(u)=0$ if $u(a)\ne 0, \forall a\in D$;

\item If $u_0, u_1: D\rightarrow \R^2$ are admissible and homotopic via an admissible family $u_t$, then $I(u_0)=I(u_1)$;

\item If $\theta: D\rightarrow D$ is a proper degree $k$ map, then $I(u\circ \theta)=k\cdot I(u)$;

\item  If all the zeros are included in disjoint union $\cup_iD_i \subset D$ where each $D_i=\theta_i(D)$ with embedding $\theta_i: D\rightarrow D$, then $I(u)=\sum I(u\circ \theta_i)$;

\item If $u$ is holomorphic, $I(u)$ is the usual multiplicity of zeros for holomorphic functions.
\end{itemize}
\end{prop}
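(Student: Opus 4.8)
The functional $I$ itself satisfies (1)--(5): for (1)--(4) this is the special case $\mathcal{O} = D^2 \times \R^2$ of the verification carried out in Proposition~\ref{icpca}, and (5) is recorded in the Examples above, so the substance of the proposition is \emph{uniqueness}. The plan is this: given any functional $I'$ on admissible $u : D^2 \to \R^2$ obeying (1)--(5), show $I'(u) = I(u)$.

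First I would use (2) to replace $u$ by a $C^0$-small smooth perturbation $\tilde u$ with finitely many zeros $p_1, \dots, p_N$, each having $\det D\tilde u(p_i) \neq 0$: the straight-line homotopy from $u$ to $\tilde u$ keeps the zero set compactly inside $D^2$, so it is admissible and $I'(u) = I'(\tilde u)$, and for the same reason $I(u) = I(\tilde u)$. If $N = 0$, (1) gives $I'(\tilde u) = 0 = I(\tilde u)$. Otherwise I would enclose the $p_i$ in disjoint coordinate disks $D_i = \theta_i(D)$ with $\theta_i$ orientation-preserving embeddings and $\theta_i(0) = p_i$; property (4) then gives $I'(\tilde u) = \sum_i I'(\tilde u \circ \theta_i)$, and likewise for $I$.

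It then remains to evaluate $I'$ on each $v_i := \tilde u \circ \theta_i : D \to \R^2$, which has $0$ as its only zero and $A_i := Dv_i(0) = D\tilde u(p_i)\, D\theta_i(0) \in \GL(2, \R)$ with $\operatorname{sign}\det A_i = \operatorname{sign}\det D\tilde u(p_i)$. After shrinking $D_i$ if necessary, the straight-line homotopy $t \mapsto (1-t)A_i(\cdot) + t v_i$ has $0$ as its unique zero throughout (since $v_i(x) = A_i x + o(|x|)$), hence is admissible, so $I'(v_i) = I'(A_i\,\cdot)$. As $\GL(2,\R)$ has exactly two path-components, a path joining $A_i$ to the identity (when $\det A_i > 0$) or to $\operatorname{diag}(1,-1)$ (when $\det A_i < 0$) is an admissible homotopy, so $I'(v_i)$ equals $I'(z)$ or $I'(\bar z)$ respectively. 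Now $z \mapsto z$ is holomorphic with a simple zero, so (5) gives $I'(z) = 1$; and $\bar z = z \circ c$ with $c(z) = \bar z$ a proper self-map of $D$ of degree $-1$, so (3) gives $I'(\bar z) = (-1)\,I'(z) = -1$. Hence $I'(v_i) = \operatorname{sign}\det D\tilde u(p_i)$, which is exactly the contribution of $p_i$ to $I(\tilde u)$ according to the defining formula for $I$; summing, $I'(u) = I'(\tilde u) = \sum_i \operatorname{sign}\det D\tilde u(p_i) = I(\tilde u) = I(u)$.

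The steps needing genuine care are the admissibility of the homotopies used --- arranged by taking $\tilde u$ and the disks $D_i$ small enough that no zero can be pushed to $\partial D^2$ --- and the use of (3) with the orientation-reversing map $c$; the latter is legitimate because the degree in (3) is an arbitrary integer, the positivity built into the functional entering only through (4) and the holomorphic model in (5). I expect this admissibility bookkeeping to be the main nuisance, but not a real obstacle: the conceptual content is simply that $I'$, being homotopy invariant and local, is pinned down by its two values $I'(z) = 1$ and $I'(\bar z) = -1$ on the generators of $\pi_0(\GL(2,\R))$, which (5) and (3) determine.
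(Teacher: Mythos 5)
Your proposal is correct and follows essentially the same route as the paper's proof: perturb to non-degenerate zeros, localize via property (4), linearize by an admissible homotopy on small disks, and reduce each local contribution to the models $z$ and $\bar z$, pinned down by (5) and by (3) applied to the degree $-1$ conjugation. The only cosmetic difference is that the paper writes the explicit linear homotopy in $z,\bar z$ coefficients where you invoke the two path components of $\GL(2,\R)$, which amounts to the same argument.
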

\begin{proof}
By Proposition 3.3 in \cite{ZintJ} (or Proposition \ref{icpca} in this paper), $I(u)$ satisfies the five properties. To show the uniqueness, we first perturb $u$ to $\tilde u$ such that all the zeros are non-degenerate.  We write the Taylor expansion in terms of $z, \bar z$ at each zero of $\tilde u$. By virtue of the fourth item we can, on a small disk around each zero, use a local linear homotopy from $\tilde{u}$ to the linear term of its Taylor expansion. By choosing the disk to be small, no more zeros would be brought in through this homotopy. The linear term at each zero (without loss, we assume the zero is the original point) can be written as $$(\frac{a+d}{2})z+(\frac{a-d}{2})\bar z+(\frac{c-b}{2})iz+(\frac{c+b}{2})i\bar z,$$ where the Jacobian matrix $\left( \begin{array}{cc}
a & b \\
c & d 
\end{array} \right)$ is non-degenerate. If the determinant is positive, a linear homotopy $$(\frac{a+d}{2})z+t(\frac{a-d}{2})\bar z+(\frac{c-b}{2})iz+t(\frac{c+b}{2})i\bar z$$ would lead to a holomorphic function with $I=1$. Notice, when $t\in [0, 1]$, the Jacobians are all non-degenerate. Similarly, when the determinant is negative, it is homotopic to an anti-holomorphic function. By the third item, an anti-holomorphic has the multiplicity opposite to its holomorphic conjugation. Hence, our multiplicity $I(u)$ is uniquely determined by the classical multiplicity of a holomorphic function and the other four properties. 
\end{proof}

\subsection{Taubes' program of SW=Gr for smooth $4$-manifolds}\label{SW}
When a $4$-manifold $X$ admits a symplectic form, Taubes' SW=Gr theorem says that the Seiberg-Witten invariants are equal to well-defined counts of pseudoholomorphic curves. When there is no symplectic form but keeping $b^+>0$,  if we choose a generic Riemannian metric, then there is a closed self-dual $2$-form which is near-symplectic. Recall a closed $2$-form $\omega$ is called near-symplectic if at each point $x$, either $(\omega\wedge\omega)(x)>0$ or  $\omega(x)=0$ and the derivative $(\nabla \omega)(x): T_xX\rightarrow \Lambda^2T_x^*X$ has rank $3$. The zero set $Z$ of a near-symplectic form is a disjoint union of embedded circles. A near-symplectic form restricted on $X\setminus Z$ is symplectic. Taubes speculated that one might extend SW=Gr to $X\setminus Z$. This symplectic form and the metric define a compatible almost complex structure $J$ on $X\setminus Z$. 

Taubes \cite{Tsd} proves the existence of pseudoholomorphic curves provided the non-vanishing of Seiberg-Witten invariants. Precisely, it says that if $X$ has a non-zero Seiberg-Witten invariant then there exists a $J$-holomorphic subvariety in $X\setminus Z$ homologically bounding $Z$ in the sense that it has intersection number $1$ with every linking $2$-sphere of $Z$. We notice that by Gromov's compactness result for pseudoholomorphic curves with boundary \cite{Ye}, we will have pseudoholomorphic subvarieties in $X\setminus Z$ for an arbitrary almost complex structure on $X\setminus Z$ tamed by $\omega$. 
Recently Gerig \cite{Ge} was able to define Taubes' Gromov invariant in this setting and extend SW=Gr over $\mathbb Z/2\mathbb Z$ to non-symplectic $4$-manifolds.

We would like to remark some relations between the above theory and $J$-anti-invariant $2$-forms. We choose a smooth family of closed $2$-forms $\alpha_t$, $ t\in [0,1]$, such that $\alpha_0$ is a $J$-anti-invariant form and any other $\alpha_t$ is near-symplectic. We can also choose a smooth family of almost complex structures $J_t$ defined on $X\setminus Z_t$ when $t\ne 0$ and on $X$ when $t=0$, such that $J_0=J$ and $J_t$ is tamed by $\alpha_t|_{X\setminus Z_t}$. Suppose the Seiberg-Witten invariant of the canonical class is non-trivial. 
By Taubes' and Gerig's results, there are certain well-understood $J_t$-holomorphic subvarieties bounding $Z_t$ when $t\ne 0$. Under this assumption, we should be able to choose a family of such pseudoholomorphic subvarieties $C_t$ such that $Z_0$ is the limit of them in the Gromov-Hausdorff sense. 

This limiting process is related to the local modification of the boundary circles as in the Luttinger-Simpson-Gompf theorem. 
For example, there are an even number of untwisted zero circles in $Z_t$ since $X$ admits an almost complex structure. We recall the type of the zero circles \cite{Ge} in the following. For any oriented circle in $Z_t$, we denote $z$ to be its unit-length tangent vector. Its normal bundle $N$ is split as $L^+\oplus L^-$ where the quadratic form $N\rightarrow \underline{\R}$, $v\mapsto \langle \iota(z)\nabla_v\omega, v\rangle$ is positive and negative definite respectively. A component of $Z_t$ is called untwisted (resp. twisted) if the line bundle $L^-$ is trivial (resp. non-trivial). It would be very interesting to see how the boundary circles ({\it i.e.} the zero circles $Z_t$), and the pseudoholomorphic subvarieties $C_t$ change as $t$ varies. 
In general, the number of boundary circles will change and  thus the topological type of $C_t$ will  change. For instance, it is possible that two untwisted zero circles come together and die, and at the same time the genus of $C_t$ increases by one. As $t$ goes to zero, the number of boundary circles would decrease to zero and we would finally get a closed pseudoholomorphic subvariety $C_0$.

On the other hand, we speculate that the existence of a non-trivial closed $J$-anti-invariant $2$-form implies the existence of an unbounded sequence $\{r_n\}\subset [1, \infty)$ such that the  perturbed Seiberg-Witten equations (2.9) in \cite{Tsd} has solutions, with the spin$^c$ structure whose positive spinor bundle $S^+=T_J^{2,0}M\oplus \underline{\mathbb C}$ and any self-dual harmonic $2$-form $\omega$. Notice we cannot expect the corresponding Seiberg-Witten invariant to be non-trivial. In fact, as pointed out by a referee, any non-K\"ahler proper elliptic surface without singular and multiple fibers has vanishing Seiberg-Witten invariant (see Example 1 of \cite{Biq}) but they have $p_g>0$.

\end{document}